\newtheorem{definition}{Definition}[section]
\newtheorem{proposition}{Proposition}[section]
\newtheorem{lemma}{Lemma}[section]
\newtheorem{theorem}{Theorem}[section]
\newtheorem{remark}{Remark}
\def\rr{\mathbb{R}}
\def\eps{\epsilon}
\def\mf{\mathcal{F}}
\def\ou{\overline u}
\def\la{\lambda}
\title[Nonlocal diffusion - convection]{Large time behaviour for a nonlocal diffusion - convection equation related with the gas dynamics }
\author[L. I. Ignat, A. Pazoto ]{Liviu I. Ignat and Ademir Pazoto}
\address{L. I. Ignat
\hfill\break\indent Institute of Mathematics ``Simion Stoilow'' of the Romanian Academy\\
\hfill\break\indent  21 Calea Grivitei Street \\010702 Bucharest \\ Romania
\hfill\break\indent \and
\hfill\break\indent
 BCAM - Basque Center for Applied Mathematics\\
 \hfill\break\indent  Alameda de Mazarredo, 14
E-48009 Bilbao, Basque Country - Spain.}
 \email{{\tt
liviu.ignat@gmail.com}\hfill\break\indent  {\it Web page: }{\tt
http://www.imar.ro/\~\,lignat}}
\address{A. Pazoto
\hfill\break\indent Instituto de Matem\'atica, Universidade Federal do Rio de Janeiro\\
\hfill\break\indent  P.O. Box 68530 \\ CEP 21945-970, Rio de Janeiro, RJ\\Brazil.}
 \email{{\tt
ademir@im.ufrj.br\hfill\break\indent  }}
\begin{document}

\begin{abstract}
In this paper we consider a model that involves nonlocal diffusion and a classical convective term. Using  a scaling argument and a new compactness argument we  obtain the first term in the asymptotic behavior of the solutions.
\end{abstract}

\maketitle

\section{Introduction}

Let us consider the following system
\begin{equation}\label{gases}
\left\{
\begin{array}{ll}
u_t+uu_x=-q_x,& x\in \rr, t>0,\\
-q_{xx}+q=-u_x, &x\in \rr,
\end{array}
\right.
\end{equation}
with $u_0$ as initial condition. This model has been considered by Hamer \cite{HAMER1971} in the context of radiating gases.
System \eqref{gases} can be rewritten as a nonhomogeneous Burgers equation of the form
\begin{equation}\label{burgers}
u_t+(\frac{u^2}{2})_x= K\ast u-u,
\end{equation}
where  kernel $K$  is given by $\frac{1}{2}e^{-|x|}$. Regarding  well-posedness, it has been proved that for initial data in $L^1(\rr)\cap L^\infty(\rr)$ there is a unique entropy solution for \eqref{burgers} (see, for instance, \cite{1052.35126}, \cite{0793.76005}, \cite{MR2103702}).
In \cite{1070.35065} the asymptotic behavior of the solutions is studied and it is proved that the solution behaves as the solution of the viscous Bourgers equation with Dirac delta initial data.

The aim of this paper is to extend the previous result to the following system
\begin{equation}\label{gas}
\left\{
\begin{array}{ll}
u_t=J\ast u-u-a (|u|^{q-1}u)_x,& x\in \rr,\ t>0,\\[10pt]
u(0)=\varphi.
\end{array}
\right.
\end{equation}
where $q\geq 2$, $a$ is a real number and $J\in L^1(1+|x|^2)$ is a continuous nonnegative, radially symmetric function with mass one. We  consider here the case $a=1$ and
positive solutions since the general case can be treated in a similar way. The multidimensional case can be treated in a similar way.

The main result of this paper is the following one.
\begin{theorem}\label{asimp}Let $1\leq p<\infty$.
For any $\varphi\in L^1(\rr)\cap L^\infty(\rr)$ the solution $u$ of system \eqref{gas} satisfies
\begin{equation}\label{lim.t}
\lim _{t\rightarrow \infty} t^{-\frac  12(1-\frac 1p)}\|u(t)-u_M(t)\|_{L^p(\rr)}=0
\end{equation}
where $u_M$ is the unique solution of the heat (when $q>2$) equation
\begin{equation}
\left\{
\begin{array}{ll}
w_t - A  w_{xx} = 0,& x\in \rr, t>0,\\[10pt]
w(x,0)=M\delta_0, &x\in \rr,
\nonumber
\end{array}
\right.
\end{equation}
 respectively, viscous Burgers equation (when $q=2$)
 \begin{equation}
\left\{
\begin{array}{ll}
w_t - A  w_{xx} + ({w^2})_x= 0,& x\in \rr, t>0,\\[10pt]
w(x,0)=M\delta_0, &x\in \rr,\nonumber
\end{array}
\right.
\end{equation}
  where $M$ is the mass of the initial data and
$$A=\frac 12\int_{\rr}z^2J(z)dz.$$
\end{theorem}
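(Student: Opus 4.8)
The plan is to prove the theorem by a parabolic rescaling, a compactness argument, and the uniqueness of source--type solutions. For $\lambda>0$ I introduce the mass--preserving rescaling
\begin{equation}
u_\lambda(x,t)=\lambda\,u(\lambda x,\lambda^2 t),\qquad J_\lambda(x)=\lambda J(\lambda x),\nonumber
\end{equation}
which keeps $\|u_\lambda(t)\|_{L^1(\rr)}=\|u(\lambda^2 t)\|_{L^1(\rr)}$ and transforms \eqref{gas} into
\begin{equation}
\partial_t u_\lambda=\lambda^2\big(J_\lambda\ast u_\lambda-u_\lambda\big)-\lambda^{\,2-q}\big(u_\lambda^{\,q}\big)_x,\qquad u_\lambda(\cdot,0)=\lambda\,\vp(\lambda\,\cdot).\nonumber
\end{equation}
Two features drive everything. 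First, a Taylor expansion together with $\int_\rr J=1$, the radial symmetry $\int_\rr zJ(z)\,dz=0$ and $\int_\rr z^2J(z)\,dz=2A$ gives, for smooth $v$, the operator convergence $\lambda^2(J_\lambda\ast v-v)\to A\,v_{xx}$ as $\lambda\to\infty$, so the nonlocal diffusion collapses onto $A\partial_{xx}$. Second, the convective term carries the prefactor $\lambda^{2-q}$, which vanishes in the limit when $q>2$ and is exactly preserved (prefactor $1$) when $q=2$; this is the mechanism behind the dichotomy heat equation / viscous Burgers. Finally $\lambda\vp(\lambda\cdot)$ concentrates weakly to $M\delta_0$, fixing the mass of the limit.

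Next I would assemble the uniform a priori bounds on $\{u_\lambda\}$. Mass conservation (both $J\ast u-u$ and $(u^q)_x$ integrate to zero) yields $\|u_\lambda(t)\|_{L^1(\rr)}=M$, while the heat--type decay estimates for \eqref{gas} give $\|u_\lambda(t)\|_{L^p(\rr)}\le C\,t^{-\frac12(1-1/p)}$ uniformly in $\lambda$ for $t$ bounded away from $0$; in particular $\{u_\lambda(t)\}$ is bounded in $L^1(\rr)\cap L^\infty(\rr)$ on each slab $\tau\le t\le T$. Testing the rescaled equation against $\psi\in W^{2,\infty}(\rr)$ and using self--adjointness of the kernel gives $|\langle\lambda^2(J_\lambda\ast u_\lambda-u_\lambda),\psi\rangle|\le CM\|\psi\|_{W^{2,\infty}}$, so $\partial_t u_\lambda$ is uniformly bounded in a negative--order space, which controls the modulus of continuity in time.

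The principal difficulty is compactness in space. Because the nonlocal semigroup generated by $J\ast u-u$ has no smoothing effect and the rescaled data $\lambda\vp(\lambda\cdot)$ become ever more singular, one cannot invoke parabolic regularity; this is exactly where the new compactness argument is needed. I would prove a uniform--in--$\lambda$ spatial equicontinuity in $L^1$, namely $\sup_\lambda\|u_\lambda(\cdot+h,t)-u_\lambda(\cdot,t)\|_{L^1(\rr)}\to0$ as $h\to0$ for $t\ge\tau>0$, by working through the Duhamel representation driven by the rescaled linear semigroup $S_\lambda(t)=e^{t\lambda^2(J_\lambda\ast-\,\mathrm{Id})}$, whose kernels converge to the Gaussian of $e^{At\partial_{xx}}$ and enjoy a smoothing that is uniform in $\lambda$ once $t$ is bounded below. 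Together with the time--equicontinuity above, a Fréchet--Kolmogorov (equivalently Aubin--Lions--Simon) argument yields relative compactness of $\{u_\lambda\}$ in $C([\tau,T];L^1_{\mathrm{loc}}(\rr))$, and the uniform $L^\infty$ bound upgrades this to $L^p_{\mathrm{loc}}(\rr)$ for every $p<\infty$.

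Finally I would pass to the limit along a subsequence $u_{\lambda_n}\to w$ and identify $w$. The operator convergence $\lambda^2(J_\lambda\ast-\,\mathrm{Id})\to A\partial_{xx}$ and the vanishing (respectively persistence) of $\lambda^{2-q}(u_\lambda^q)_x$ show that $w$ solves the heat equation when $q>2$ and the viscous Burgers equation when $q=2$, with $w(\cdot,0)=M\delta_0$ in the sense of measures. Since the source--type solution $u_M$ of each problem is unique, the limit is independent of the subsequence and the whole family converges, so $u_\lambda(\cdot,1)\to u_M(\cdot,1)$ in $L^p(\rr)$. Using the self--similarity $u_M(x,1)=\lambda\,u_M(\lambda x,\lambda^2)$ and a change of variables, the choice $\lambda=\sqrt t$ translates this convergence back into \eqref{lim.t}, completing the proof. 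The step I expect to be hardest is precisely the uniform--in--$\lambda$ compactness, since the nonlocal diffusion provides no a priori gain of regularity and the concentration of the rescaled data must be absorbed by the limiting smoothing alone.
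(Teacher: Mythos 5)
Your overall skeleton (parabolic rescaling $u_\lambda(x,t)=\lambda u(\lambda x,\lambda^2 t)$, compactness on time slabs $[\tau,T]$, identification of the limit through $\lambda^2(J_\lambda\ast\cdot-\cdot)\to A\partial_{xx}$ and the dichotomy $\lambda^{2-q}$, uniqueness of the source-type solution, and scaling back with $\lambda=\sqrt t$) coincides with the paper's. However, the step you yourself flag as the hardest --- and which is the actual content of the paper --- is carried by an argument that does not work. You propose to get spatial equicontinuity from the Duhamel formula driven by $S_\lambda(t)=e^{t\lambda^2(J_\lambda\ast-\,\mathrm{Id})}$, claiming these semigroups ``enjoy a smoothing that is uniform in $\lambda$ once $t$ is bounded below.'' They do not: the semigroup decomposes as $S_\lambda(t)=e^{-\lambda^2 t}\,\mathrm{Id}+K_{\lambda,t}$, where only $K_{\lambda,t}$ is smooth, and in the Duhamel integral $\int_0^t S_\lambda(t-s)\,(u_\lambda^q)_x(s)\,ds$ one must apply $S_\lambda(t-s)$ for $t-s$ arbitrarily small, where the identity part has coefficient of order one. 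The derivative in the convection term cannot be absorbed by that singular part; its contribution $\lambda^{2-q}\int_0^t e^{-\lambda^2(t-s)}(u_\lambda^q)_x(s)\,ds$ is small only in a negative-regularity norm (it is the $x$-derivative of a quantity that is $O(\lambda^{-q})$ in $L^1$), and smallness in $W^{-1,1}$ does not rescue relative compactness in $L^1_{\mathrm{loc}}$. This is precisely the obstruction the paper points out in the introduction (the lack of smoothing of the nonlocal semigroup is why the techniques of Escobedo--Zuazua for $u_t=u_{xx}+a(|u|^{q-1}u)_x$ fail here), and it is circumvented by a genuinely different mechanism: the energy identity gives a uniform bound on the nonlocal quadratic forms $\lambda^2\iint J_\lambda(x-y)(u_\lambda(x)-u_\lambda(y))^2\,dx\,dy$, and a new nonlocal Aubin--Lions-type criterion (Proposition \ref{main}, resting on the Bourgain--Brezis--Mironescu-type Theorems \ref{rossi}--\ref{rossi-time} and the interpolation Lemma \ref{balance}, $\|u\|_{L^2}^2\le \eps\, n^2\iint\rho_n(x-y)(u(x)-u(y))^2+\tfrac2\eps\|u\|_{H^{-1}}^2$) replaces the missing $H^1$ bound. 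Your proposal contains no substitute for this ingredient, so the compactness step is a genuine gap, not a different route.

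Two further, smaller points. First, your compactness (even if repaired) is only local in space, yet you conclude $u_\lambda(\cdot,1)\to u_M(\cdot,1)$ in $L^p(\rr)$; passing from $L^1_{\mathrm{loc}}$ to $L^1(\rr)$ requires a uniform-in-$\lambda$ control of the tails $\int_{|x|>R}u_\lambda(t,x)\,dx$, which the paper proves in Lemma \ref{est.2r} and which is absent from your argument. Second, the passage to the limit in the nonlinear term for $q=2$ needs weak convergence of $u_\lambda^2$ to $\ou^2$ (identification of the weak limit of the squares), which the paper gets from a.e.\ convergence plus the uniform $L^p$ bounds; this is routine once strong compactness is available, but it should be stated, since it is exactly where a merely weak limit would not suffice.
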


We point out that when $q=2$ the asymptotic behavior is obtained in \cite{1070.35065} by  crucially using  the Oleinik  estimate $u_x\leq 1/t$, which is not  available in the case $q>2$ and new techniques have to be developed. Even if the case $q=2$ can be treated as in \cite{1070.35065} we prefer to give a proof that covers both the critical and the super-critical case $q\geq 2$.

We obtain the asymptotic behavior in Theorem \ref{asimp} by using a scaling method, i.e. we introduce a family of scaled solutions $\{u_\lambda\}_{\lambda>0}$ and reduce the asymptotic property \eqref{lim.t} to the strong convergence of the scaled family  (see for example \cite{MR2739418}). However, there is an important difficulty, namely we have to obtain the compactness of the family $\{u_\lambda\}_{\lambda>0}$. The difficulty comes from the lack of the smoothing effect of the linear semigroup
involved here. This has been already observed in the case of nonlocal evolution problems \cite[Ch. 1]{1214.45002}. In view of these difficulties, to obtain the compactness of the trajectories $\{u_\lambda(t)\}_{\lambda>0}$ we need to obtain a compactness argument that does not involve derivatives of $u_\lambda$ but  some nonlocal quadratic forms instead. The main tool to obtain the desired compactness is given in Section \ref{proofs} in Proposition \ref{main}.

The lack of smoothing effect of the linear semigroup does not allow us to apply the techniques used in \cite{0762.35011} for the convection-diffusion equation $u_t=u_{xx}+a(|u|^{q-1}u)_x$. In \cite{0762.35011} the authors use the fact that the heat kernel is smooth and prove that  the nonlinearity decays faster than the linear part and then the asymptotic behavior is given by the linear semigroup.

Our results presented here could be compared with the ones in \cite{1052.35126} where the hyperbolic-parabolic and hyperbolic-hyperbolic relaxation limits are studied. In the cited paper the authors do not rescale the initial datum. As a consequence, for $q=2$,  they obtain the limit profile to be the solution of the viscous Burgers equation with initial data $\varphi$ instead of $M\delta_0$.


The extension of the results presented here  to the case $q\in (1,2)$ remains an open problem. In the classical convection-diffusion problem the case $q\in (1,2)$ has been considered in
 \cite{MR1233647}.

 The paper is organized as follows: in Section \ref{prelim} we present some preliminary results on system \eqref{gas} and present some known compactness results that will be useful in the proof of Theorem \ref{asimp}. In Section \ref{proofs} we give the proof of the main result of this paper, Theorem \ref{asimp}.

\section{prelimiaries}\label{prelim}

For the sake of completeness we present some results obtained in \cite{1052.35126} regarding the well-posedness of system \eqref{gas}. We pointed out that the authors proved the existence and uniqueness of a weak entropy solution and, as usual, such a class of solution is defined as follows:

\begin{definition} Let $f(u)$ be a $C^2$ flux and $J$ a nonnegative, even function with mass one.
A bounded measurable function $u$ is a weak solution to
\begin{equation}\label{claws}
\left\{
\begin{array}{ll}
u_t+(f(u))_x=J\ast u-u,& x\in \rr,t>0,\\[10pt]
u(0)=\varphi.
\end{array}
\right.
\end{equation}
 if it verifies this relation in the distributional sense and the test functions are smooth with compact support, intersecting the line $t=0$. A weak solution is entropic if, in addition, it verifies the inequality
\begin{equation}\label{def}
\begin{array}{c}
\displaystyle\int_0^T\int_{-\infty}^{+\infty}[\eta(u)\psi_t + q(u)\psi_x]dx dt + \int_0^T\int_{-\infty}^{+\infty}\eta(u_0(x))\psi(x,0)dx\\
\geq \displaystyle\int_0^T\int_{-\infty}^{+\infty}\eta'(u)[u-J\ast u]\psi dx dt,
\end{array}
\end{equation}
for any convex entropy $\eta$ with flux $q$ given by
$q'(s)=f'(s)\eta'(s)$
and for any nonnegative Lipschitz continuous test function $\psi$ on $\rr\times [0,T]$ with compact support, intersecting the line $t=0$.
\end{definition}

\begin{remark}\label{latt1} From the classical theory of scalar conservation laws, it is known that it is sufficient to obtain the above inequality for $\eta(u)=\pm u$, together with the family $\eta_k(u)=(u-k)^+$, $k\in \rr$, where $w^+=\max\{w,0\}$.
\end{remark}

Using the vanishing viscosity method in \cite{1052.35126} the authors  prove the following result.

\begin{theorem}\label{ex-latt}For any $u_0\in L^1(\rr^d)\cap L^\infty(\rr^d)$ and $T>0$, there exists a unique function $u\in C([0,T], L^1(\rr))\cap L^\infty( [0,T]\times \rr)$ entropy solution of \eqref{claws}. Moreover, the solution is global. For any nonnegative initial data the solution is nonnegative.
\end{theorem}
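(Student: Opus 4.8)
The plan is to construct the solution by the announced vanishing viscosity method and then to settle uniqueness by a Kru\v{z}kov-type doubling argument adapted to the nonlocal source. For $\eps>0$ I would first regularize the problem by adding an artificial diffusion,
\[
u^\eps_t+(f(u^\eps))_x=\eps\, u^\eps_{xx}+J\ast u^\eps-u^\eps,\qquad u^\eps(0)=\vp,
\]
which is a uniformly parabolic semilinear equation. Because $J\ge0$ and $\int_\rr J=1$, the operator $v\mapsto J\ast v-v$ is a bounded linear map on both $L^1(\rr)$ and $L^\infty(\rr)$, so classical parabolic theory (or a contraction fixed point in $C([0,T],L^1\cap L^\infty)$) produces a unique smooth global solution $u^\eps$ for each fixed $\eps$.

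The heart of the matter is a family of a priori bounds that are \emph{uniform} in $\eps$. For the $L^\infty$ bound I would use the maximum principle: at a spatial maximum of $u^\eps$ one has $J\ast u^\eps\le\|u^\eps\|_{L^\infty}$ and $\eps\, u^\eps_{xx}\le0$, so the source cannot raise the supremum and $\|u^\eps(t)\|_{L^\infty}\le\|\vp\|_{L^\infty}$. The same sign argument at a negative minimum gives $u^\eps\ge0$ when $\vp\ge0$. Integrating against $\sgn(u^\eps)$ and using $\int_\rr(J\ast|u^\eps|-|u^\eps|)=0$ yields $\|u^\eps(t)\|_{L^1}\le\|\vp\|_{L^1}$, with the mass exactly conserved in the nonnegative case. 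To obtain compactness I would first treat $BV$ data: differentiating the equation, $w=u^\eps_x$ solves a linear advection--diffusion equation with nonlocal forcing $J\ast w-w$, and the $L^1$ estimate together with $\int_\rr\sgn(w)(J\ast w-w)\le0$ controls $\int_\rr|u^\eps_x|$ uniformly in $\eps$; the general $L^1\cap L^\infty$ case then follows by approximating $\vp$ in $L^1$ and invoking the contraction estimate established below.

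With these bounds, Kolmogorov--Riesz (or Helly) compactness extracts a subsequence converging in $C([0,T],L^1_{loc}(\rr))$ to a limit $u\in C([0,T],L^1(\rr))\cap L^\infty([0,T]\times\rr)$. Passing to the limit in the viscous entropy inequality is standard: testing the regularized equation against $\eta'(u^\eps)\psi$ and integrating the parabolic term by parts produces $-\eps\int\eta''(u^\eps)|u^\eps_x|^2\psi\le0$ for convex $\eta$, a term of the correct sign that is discarded in the limit, so that $u$ satisfies \eqref{def}. By Remark~\ref{latt1} it suffices to verify this for $\eta(u)=\pm u$ and $\eta_k(u)=(u-k)^+$. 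Nonnegativity and the global $L^1,L^\infty$ bounds pass to the limit directly, giving a global nonnegative entropy solution.

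For uniqueness I would run the doubling of variables. The only new ingredient relative to the classical scalar law is the source $J\ast u-u$, which appears in the comparison of two entropy solutions $u,v$ as $\int_\rr\sgn(u-v)\big[J\ast(u-v)-(u-v)\big]\,dx$. Writing $w=u-v$, this is nonpositive since $\int_\rr\sgn(w)\,J\ast w\le\int_\rr J\ast|w|=\int_\rr|w|$, so the doubling argument yields $\tfrac{d}{dt}\|u(t)-v(t)\|_{L^1}\le0$, i.e. a genuine $L^1$-contraction, hence uniqueness. The main obstacle I anticipate is not the contraction itself but securing the uniform-in-$\eps$ compactness in the absence of any smoothing from the nonlocal operator: one must confirm that the $BV$ estimate survives the presence of $J\ast u^\eps$ and that the reduction from $BV$ to general $L^1\cap L^\infty$ data goes through cleanly. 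Both steps reduce to the boundedness and mass-one normalization of the convolution kernel and should close with careful bookkeeping.
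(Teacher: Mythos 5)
The paper does not prove Theorem~\ref{ex-latt} at all: it is quoted from \cite{1052.35126}, with the remark that it is obtained there by the vanishing viscosity method. Your outline---viscous regularization with uniform $L^\infty$, $L^1$ and $BV$ bounds (exploiting that $J\geq 0$, $\int J=1$ makes $v\mapsto J\ast v-v$ dissipative for these norms), compactness, passage to the limit in the entropy inequality, and uniqueness via Kru\v{z}kov doubling with the contraction estimate $\int\sgn(u-v)\left[J\ast(u-v)-(u-v)\right]dx\leq 0$---is precisely that approach, and I see no essential gap in it.
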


%
%
%

%
%

We  now recall some results about the characterization of compact sets in $L^p(0,T,B)$ where $B$ is a Banach space.
\begin{theorem}[\cite{MR916688}, Th.~1, p.~71] \label{simon}
Let $\mf \subset L^p(0,T,B)$. $\mf$ is relatively compact in $L^p(0,T,B)$ for $1\leq p<\infty$, or $C(0,T,B)$ for $p=\infty$ if and only if
\begin{enumerate}
\item \label{condition.1}
$\{\int _{t_1}^{t_2} f(t)dt, \, f\in \mf\}$ is  relatively compact in $B$ for all $0<t_1<t_2<T$,\\

\item  \label{condition.2}  $\|\tau _h f-f\|_{L^p(0,T-h,B)}\rightarrow 0$ as $h\rightarrow 0$ uniformly for $f\in \mf$.
\end{enumerate}
\end{theorem}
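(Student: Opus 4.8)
The plan is to prove the equivalence in both directions, disposing of the forward implication (necessity) quickly and concentrating on the converse (sufficiency), which carries the real content. For necessity, suppose $\mf$ is relatively compact in $L^p(0,T,B)$. Condition \eqref{condition.1} is then immediate: for fixed $0<t_1<t_2<T$ the linear map $f\mapsto \int_{t_1}^{t_2}f(t)\,dt$ is continuous from $L^p(0,T,B)$ into $B$ (by H\"older's inequality in the time variable), and the continuous image of a relatively compact set is relatively compact. Condition \eqref{condition.2} is the assertion that time-translation is continuous in $L^p$, \emph{uniformly} over $\mf$: one first checks $\|\tau_h f-f\|_{L^p}\to 0$ for a single $f$ by density of compactly supported continuous functions, and then upgrades to uniformity over the totally bounded family $\mf$ by a standard three-$\varepsilon$ argument on a finite net.

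The heart of the matter is sufficiency, and I would reduce it to showing that $\mf$ is totally bounded, i.e. that for every $\varepsilon>0$ it admits a finite $\varepsilon$-net in $L^p(0,T,B)$. The key device is the family of time-averages (Steklov averages)
\begin{equation}
f_h(t)=\frac1h\int_0^h f(t+s)\,ds=\frac1h\int_0^h (\tau_s f)(t)\,ds,\qquad t\in(0,T-h).\nonumber
\end{equation}
Condition \eqref{condition.2} controls the approximation error directly, since
\begin{equation}
\|f_h-f\|_{L^p(0,T-h,B)}\le \frac1h\int_0^h \|\tau_s f-f\|_{L^p(0,T-h,B)}\,ds\le \sup_{0<s\le h}\|\tau_s f-f\|_{L^p},\nonumber
\end{equation}
so by the uniformity in \eqref{condition.2} one may fix $h$ small enough that $\|f_h-f\|_{L^p(0,T-h,B)}<\varepsilon/3$ simultaneously for all $f\in\mf$. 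It then remains to produce a finite net for the averaged family $\{f_h:f\in\mf\}$.

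For this fixed $h$ I would show that $\{f_h:f\in\mf\}$ is relatively compact in $C([\delta,T-h];B)$ by the Arzel\`a--Ascoli theorem for $B$-valued continuous functions. Pointwise relative compactness in $B$, namely that $\{f_h(t):f\in\mf\}$ is relatively compact for each fixed $t\in[\delta,T-h]$, is precisely condition \eqref{condition.1}, since $f_h(t)=\tfrac1h\int_t^{t+h}f$; here the restriction to $t\ge\delta>0$ is needed because \eqref{condition.1} excludes $t_1=0$. Equicontinuity in $t$ follows again from \eqref{condition.2}: writing $f_h(t')-f_h(t)=\tfrac1h\int_0^h[(\tau_{t'-t}f)(t+s)-f(t+s)]\,ds$ and applying H\"older in $s$ gives $\|f_h(t')-f_h(t)\|_B\le h^{-1/p}\|\tau_{t'-t}f-f\|_{L^p}$, which tends to $0$ as $t'\to t$ uniformly over $\mf$. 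Arzel\`a--Ascoli then furnishes a finite $\varepsilon/3$-net for $\{f_h\}$ in the sup-norm, hence in $L^p(\delta,T-h,B)$; combined with the previous paragraph, and after bounding the boundary strips $(0,\delta)$ and $(T-h,T)$ so that their total $L^p$-contribution is below $\varepsilon/3$, this assembles into a finite $\varepsilon$-net for $\mf$ and establishes total boundedness. The case $p=\infty$ is handled by the same scheme, yielding relative compactness in $C(0,T,B)$.

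The step I expect to be the main obstacle is the control of the boundary strips $(0,\delta)$ and $(T-h,T)$, which requires extracting from \eqref{condition.1}--\eqref{condition.2} a uniform-in-$f$ bound on the $L^p$-mass of $\mf$ near the endpoints of the interval; this is the one place where the two hypotheses, which separately only constrain integrals and translates rather than pointwise values, must be shown to combine into genuine uniform smallness. Equally delicate is keeping every estimate uniform over $f\in\mf$ as $\delta\to 0$, so that the Arzel\`a--Ascoli compactness survives the passage and the three error contributions can be summed to a single $\varepsilon$.
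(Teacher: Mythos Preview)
The paper does not prove Theorem~\ref{simon}; it is quoted verbatim from Simon \cite{MR916688} as a tool and no argument is supplied. There is therefore nothing in the paper to compare your proposal against.

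That said, your outline is essentially Simon's own proof: necessity via continuity of $f\mapsto\int_{t_1}^{t_2}f$ and a three-$\varepsilon$ argument on a finite net; sufficiency via Steklov averages, with condition~\eqref{condition.2} giving the uniform approximation $\|f_h-f\|_{L^p}\to 0$ and condition~\eqref{condition.1} feeding Arzel\`a--Ascoli for the averaged family. Your equicontinuity estimate $\|f_h(t')-f_h(t)\|_B\le h^{-1/p}\|\tau_{t'-t}f-f\|_{L^p}$ is correct, and you rightly flag the boundary strips as the delicate point. In Simon's paper this is handled by first proving (as a separate lemma) that \eqref{condition.1} and \eqref{condition.2} together force $\mf$ to be bounded in $L^p(0,T,B)$, and then arranging the averaging so that the endpoint mass is absorbed into the translation estimate; your sketch stops just short of executing this, but you have identified exactly where the remaining work lies.
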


Also the following compactness criterion  is given.

\begin{theorem}[\cite{MR916688}, Th.~5, p.~84]\label{3spaces}
Let us consider three Banach spaces  $X\hookrightarrow B \hookrightarrow Y$ where the embedding $X\hookrightarrow B$ is compact.
Assume $1\leq p\leq \infty$ and \\
i) $\mf$ is bounded in $L^p(0,T,X)$\\
ii) $\|\tau _h f-f\|_{L^p(0,T-h,Y)}\rightarrow 0$ as $h\rightarrow 0$ uniformly for $f\in \mf$.

Then $\mf $ is relatively compact in $L^p(0,T,B)$ (and in $C(0,T,B)$ is $p=\infty$).
\end{theorem}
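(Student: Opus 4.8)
The plan is to prove \eqref{lim.t} by a parabolic scaling argument that reduces it to the strong convergence of a rescaled family, whose compactness is extracted from a nonlocal dissipation form rather than from spatial derivatives. For $\lambda>0$ I first introduce the rescaled solution and kernel
\[
u_\lambda(x,t)=\lambda\,u(\lambda x,\lambda^2 t),\qquad J_\lambda(x)=\lambda J(\lambda x).
\]
A direct computation shows that $u_\lambda$ solves
\[
\partial_t u_\lambda=\lambda^2\bigl(J_\lambda\ast u_\lambda-u_\lambda\bigr)-\lambda^{\,2-q}\bigl(u_\lambda^{\,q}\bigr)_x,
\]
with datum $\lambda\varphi(\lambda\,\cdot)$, and that $u_M$ is invariant under this scaling, $\lambda u_M(\lambda x,\lambda^2 t)=u_M(x,t)$. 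Performing the change of variables $y=\lambda x$ in the $L^p$ norm and setting $\lambda=\sqrt t$ yields the exact identity
\[
t^{-\frac12(1-\frac1p)}\,\|u(t)-u_M(t)\|_{L^p(\rr)}=\|u_\lambda(\cdot,1)-u_M(\cdot,1)\|_{L^p(\rr)},\qquad \lambda=\sqrt t,
\]
so that \eqref{lim.t} is equivalent to $u_\lambda(\cdot,1)\to u_M(\cdot,1)$ in $L^p(\rr)$ as $\lambda\to\infty$; this is the statement I would prove.

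Next I would collect the bounds uniform in $\lambda$. Mass is conserved, $\int_\rr u_\lambda(\cdot,t)=M$, and the diffusive decay $\|u(t)\|_{L^p(\rr)}\le Ct^{-\frac12(1-1/p)}$ of the original solution rescales to $\|u_\lambda(\cdot,t)\|_{L^p(\rr)}\le Ct^{-\frac12(1-1/p)}$, bounding the family in $L^1\cap L^\infty$ for each fixed $t>0$. The second moment, controlled for the original solution via $\int_\rr x^2(J\ast u-u)=2AM$ and integration by parts of the convective term, rescales to $\int_\rr x^2u_\lambda(x,t)\,dx\le C(\lambda^{-2}+t)$, giving tightness uniformly in $\lambda$. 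The key estimate is the nonlocal dissipation: using the entropy inequality for $\eta(s)=s^2/2$, whose convective contribution vanishes because $\int_\rr u_\lambda(u_\lambda^q)_x=0$, one obtains for $0<t_0<T$
\[
\int_{t_0}^{T}\lambda^2\!\int_\rr\!\int_\rr J_\lambda(x-y)\,|u_\lambda(x,t)-u_\lambda(y,t)|^2\,dx\,dy\,dt\ \le\ \|u_\lambda(\cdot,t_0)\|_{L^2(\rr)}^2\ \le\ C\,t_0^{-1/2}.
\]
Here it is essential to start from $t_0>0$, since $\|u_\lambda(\cdot,0)\|_{L^2(\rr)}^2=\lambda\|\varphi\|_{L^2(\rr)}^2\to\infty$, reflecting the concentration $\lambda\varphi(\lambda\,\cdot)\rightharpoonup M\delta_0$.

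The compactness is the main obstacle, and it is here that the absence of smoothing forces a new argument. The spatial bound above is not a Sobolev bound, so Theorem \ref{3spaces} cannot be used with a fixed space $X=H^1$; worse, the compactifying form $\lambda^2\int\!\int J_\lambda(x-y)|v(x)-v(y)|^2\,dx\,dy$ itself depends on $\lambda$, so no fixed intermediate space is available. I would therefore establish a dedicated nonlocal compactness criterion (Proposition \ref{main}): a family $\{v_\lambda\}$ bounded in $L^2(\rr)$ with $\sup_\lambda\lambda^2\int\!\int J_\lambda(x-y)|v_\lambda(x)-v_\lambda(y)|^2\,dx\,dy<\infty$ is relatively compact in $L^2_{\mathrm{loc}}(\rr)$. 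For the time variable I would bound $\partial_t u_\lambda$ uniformly in a negative-order space $Y\supset L^2_{\mathrm{loc}}$: the convective term is controlled by $\lambda^{2-q}\le1$ and the uniform $L^\infty$ bound, while the diffusive term is estimated through the symmetry $\int(J_\lambda\ast u_\lambda-u_\lambda)\phi=\int u_\lambda(J_\lambda\ast\phi-\phi)$ and the dissipation bound. This yields the time-equicontinuity of condition \ref{condition.2}, while the nonlocal criterion yields the compactness of time averages of condition \ref{condition.1}; Theorem \ref{simon} then gives relative compactness of $\{u_\lambda\}$ in $L^p_{\mathrm{loc}}((0,\infty)\times\rr)$, hence of $\{u_\lambda(\cdot,1)\}$ in $L^1_{\mathrm{loc}}(\rr)$.

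Finally, along any subsequence $\lambda_n\to\infty$ with $u_{\lambda_n}\to U$ I would pass to the limit in the entropy formulation. Since $\lambda^2(J_\lambda\ast\phi-\phi)\to A\phi_{xx}$ uniformly for $\phi\in C_c^\infty$ (Taylor expansion, the first moment of $J$ vanishing and the second equal to $2A$) and $u_{\lambda_n}^q\to U^q$ strongly (from $L^p_{\mathrm{loc}}$ convergence and the uniform $L^\infty$ bound), the diffusive term converges to $A\int\!\int U\phi_{xx}$, while the convective term $\lambda^{2-q}\int\!\int u_\lambda^q\phi_x$ tends to $0$ when $q>2$ and to $\int\!\int U^2\phi_x$ when $q=2$. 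Hence $U$ is a weak solution of $U_t=AU_{xx}$ for $q>2$, respectively an entropy solution of $U_t=AU_{xx}-(U^2)_x$ for $q=2$. Nonnegativity and mass conservation pass to the limit, and $\int_\rr x^2U(x,t)\,dx\le Ct$ forces $U(\cdot,t)\rightharpoonup M\delta_0$ as $t\to0^+$, so $U$ is the source-type solution of mass $M$; by its uniqueness $U=u_M$. As the limit is independent of the subsequence, the whole family converges, and tightness upgrades $L^1_{\mathrm{loc}}$ to $L^1(\rr)$ convergence at $t=1$, which interpolated with the uniform $L^\infty$ bound gives convergence in $L^p(\rr)$ for every $1\le p<\infty$. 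By the reduction this is exactly \eqref{lim.t}. I expect the compactness step --- isolating and applying the nonlocal criterion with no derivative control and a $\lambda$-dependent seminorm --- to be the principal difficulty.
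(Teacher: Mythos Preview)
Your proposal does not address the stated theorem at all. The statement you were asked to prove is Theorem~\ref{3spaces}, the abstract Aubin--Lions--Simon compactness criterion for three Banach spaces $X\hookrightarrow B\hookrightarrow Y$ with $X\hookrightarrow B$ compact. What you have written is instead a proof sketch of Theorem~\ref{asimp}, the paper's main asymptotic result: you introduce the rescaled family $u_\lambda$, derive the scaled equation, collect uniform bounds, discuss the nonlocal compactness argument, and identify the limit as $u_M$. None of this has anything to do with establishing Theorem~\ref{3spaces} itself, which is a general functional-analytic statement with no PDE, no $u_\lambda$, and no kernel $J$ in sight.

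For the record, the paper does not give a full proof of Theorem~\ref{3spaces} either --- it is cited from \cite{MR916688} --- but it does indicate the mechanism: one combines Theorem~\ref{simon} with the Ehrling-type inequality
\[
\|u\|_B\leq \varepsilon\,\|u\|_X+\eta(\varepsilon)\,\|u\|_Y,\qquad \forall\,u\in X,
\]
which follows from the compactness of $X\hookrightarrow B$. Condition (i) and this inequality yield condition~\ref{condition.2} of Theorem~\ref{simon} in the $B$-norm from condition (ii) in the $Y$-norm, while condition (i) alone gives the relative compactness of time averages required by condition~\ref{condition.1}. A correct proof of the stated theorem would proceed along these lines; your write-up, however accurate it may be as an outline for Theorem~\ref{asimp}, is simply a proof of a different result.
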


This last criterion is obtained  by using Theorem \ref{simon} and the following inequality that follows from the fact that space $X$ is compactly embedded in space $B$: for any $\eps>0$ there exists $\eta(\eps)>0$ such that
\begin{equation}\label{ineg.eps}
\|u\|_B\leq \eps \|u\|_X+\eta(\eps) \|u\|_Y,\quad \forall \, u \in X.
\end{equation}
In the nonlocal setting we will give a similar result in Lemma \ref{balance} in Section \ref{proofs}.

\medskip
We recall now some compactness results that have been proven in the nonlocal context \cite{1103.46310}, \cite{1214.45002}.

\begin{theorem}[\cite{1214.45002}, Th.~6.11, p.~126]\label{rossi}
Let $1\leq p<\infty$ and $\Omega\subset \rr$ open. Let $\rho:\rr\rightarrow \rr$ be a nonnegative smooth continuous radial function with compact support, non identically zero, and $\rho_n(x)=n\rho(nx)$. Let $\{f_n\}_{n\geq 1}$ be a  bounded sequence in $L^p(\rr)$ such that for all $n\geq 1$
\begin{equation}\label{ros.1}
\int _{\Omega}\int _{\Omega} \rho_n(x-y) |f_n(x)-f_n(y)|^pdxdy\leq \frac {M}{n^p}.
\end{equation}
The following hold:

1. If $\{f_n\}_{n\geq 1}$ is weakly convergent in $L^p(\Omega)$ to $f$ then $f\in W^{1,p}(\Omega)$ for $p>1$ and $f\in BV(\Omega)$ for $p=1$.

2. Assuming  that $\Omega$ is a smooth bounded domain in $\rr$ and $\rho(x)\geq \rho(y)$ if $|x|\leq |y|$ then $\{f_n\}_{n\geq 1}$ is relatively compact in $L^p(\Omega)$.
\end{theorem}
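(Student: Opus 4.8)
The plan is to treat the two assertions separately, since they call for different tools: the regularity of weak limits (part~1) comes from a duality/test-function argument, while the compactness (part~2) rests on upgrading the single averaged bound \eqref{ros.1} into a scale-by-scale control of translations. For part~1, assume $f_n\rightharpoonup f$ in $L^p(\Omega)$ and fix $\phi\in C_c^\infty(\Omega)$. I would test the differences against $\phi$ through
\[
I_n:=\int_\Omega\int_\Omega \rho_n(x-y)\sgn(x-y)\,[f_n(x)-f_n(y)]\,\phi\Big(\tfrac{x+y}{2}\Big)\,dx\,dy .
\]
Two estimates close the argument. On one hand, Hölder together with \eqref{ros.1} and $\int_\rr\rho_n=\int_\rr\rho$ gives $|I_n|\le (M/n^p)^{1/p}\,(\int_\rr\rho)^{1/p'}\|\phi\|_{L^{p'}}=C\|\phi\|_{L^{p'}}/n$. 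On the other hand, writing $x=s+z/2$, $y=s-z/2$ and integrating by parts in $s$ so that the derivative falls on $\phi$ and not on $f_n$, and using $\int_\rr\rho_n(z)|z|\,dz=n^{-1}\int_\rr\rho(w)|w|\,dw$, the bracketed kernel multiplying $f_n$ converges in $L^{p'}$ to $\kappa\phi'$ with $\kappa=-\int_\rr\rho(w)|w|\,dw\neq0$; hence $nI_n\to \kappa\int_\Omega f\phi'$ by the weak convergence $f_n\rightharpoonup f$. Combining the two, $|\kappa|\,|\int_\Omega f\phi'|\le C\|\phi\|_{L^{p'}}$ for every test $\phi$, so by duality $Df\in L^p(\Omega)$ when $p>1$ (thus $f\in W^{1,p}$) and $Df$ is a finite measure when $p=1$ (thus $f\in BV$).

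For part~2 I would invoke the classical Fréchet–Kolmogorov criterion and reduce everything to equicontinuity of translations via ball averages. Since $\rho$ is radial, continuous and radially nonincreasing with $\rho\not\equiv0$, we have $\rho(0)>0$, so there are $r_0,c_0>0$ with $\rho\ge c_0$ on $\{|z|\le r_0\}$; consequently $\rho_n(z)=n\rho(nz)\ge nc_0$ for $|z|\le \delta_n:=r_0/n$. Restricting \eqref{ros.1} to this set and writing $G_n(z)=\|\tau_z f_n-f_n\|_{L^p(\Omega\cap(\Omega-z))}^p$ yields the key averaged bound
\[
\int_{|z|\le \delta_n} G_n(z)\,dz\le \frac{1}{nc_0}\int_\rr\rho_n(z)G_n(z)\,dz\le \frac{M}{c_0\,n^{p+1}} .
\]
This is the only place where the monotonicity of $\rho$ is used, and it is essential: without a pointwise lower bound on $\rho_n$ the hypothesis \eqref{ros.1} gives no usable information at a fixed scale.

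The remaining step is to upgrade this control at the scale $\delta_n$ into a uniform modulus of continuity at every fixed scale $\delta>0$. Fix $\Omega'\Subset\Omega$ and $\delta<\tfrac12\,\mathrm{dist}(\Omega',\partial\Omega)$, and for $|z|<\delta$ split $z$ into $m=\lceil \delta/\delta_n\rceil$ equal steps of length $\le\delta_n$; subadditivity of $w\mapsto\tau_w f_n$ gives $\|\tau_z f_n-f_n\|_{L^p(\Omega')}\le m\,\|\tau_{z/m}f_n-f_n\|$, and after the change of variables $w=z/m$ and insertion of the averaged bound one obtains $\int_{|z|<\delta}\|\tau_z f_n-f_n\|_{L^p(\Omega')}^p\,dz\le C\delta^{p+1}$ uniformly in $n$ (for $n\ge r_0/\delta$; the finitely many remaining indices are absorbed using the individual continuity of translations in $L^p$). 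Denoting by $A_\delta$ the average over balls of radius $\delta$, Jensen's inequality then gives $\sup_n\|f_n-A_\delta f_n\|_{L^p(\Omega')}^p\le \tfrac{C}{2}\delta^{p}\to0$ as $\delta\to0$. Since for each fixed $\delta$ the family $\{A_\delta f_n\}_n$ is bounded in $W^{1,p}(\Omega')$ and hence relatively compact in $L^p(\Omega')$ by Rellich, and $\{f_n\}$ is uniformly $L^p$-approximable by these relatively compact families, $\{f_n\}$ is relatively compact in $L^p(\Omega')$. The passage from interior subdomains to $L^p(\Omega)$ uses the smoothness of $\partial\Omega$: flattening the boundary in finitely many charts and applying the same translation estimate in the inward normal direction controls the mass in a boundary collar uniformly in $n$, removing the loss of compactness at $\partial\Omega$.

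I expect the chaining estimate in the third paragraph to be the main obstacle. Turning the single averaged inequality \eqref{ros.1} into the scale-independent bound $\int_{|z|<\delta}\|\tau_z f_n-f_n\|^p\,dz\le C\delta^{p+1}$ is precisely where the hypotheses on $\rho$ are genuinely exploited, and the careful bookkeeping of the constants together with the boundary charts (where the smoothness of $\Omega$ is indispensable) forms the technical heart of the argument; part~1, by contrast, is comparatively soft.
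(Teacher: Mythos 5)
A preliminary remark: the paper never proves Theorem \ref{rossi} --- it is imported verbatim from the cited reference (Th.~6.11 of \cite{1214.45002}) and used as a black box; the paper only proves its own variants (Proposition \ref{main}, Lemmas \ref{balance} and \ref{local.sup}). So there is no internal proof to compare against, and your argument must be judged on its own merits; it is in effect a reconstruction of the Bourgain--Brezis--Mironescu/Ponce-type argument behind the cited result. Your Part 1 is correct: the antisymmetrization with $\sgn(x-y)$ is exactly the right device, H\"older plus \eqref{ros.1} gives $|I_n|\leq C M^{1/p}\|\phi\|_{L^{p'}}/n$, and after the change of variables the kernel multiplying $f_n$ converges in $L^{p'}$ (with rate $O(1/n)$, using the second moment of $\rho$, finite by compact support) to $\kappa\phi'$ with $\kappa=-\int_{\rr}\rho(w)|w|\,dw\neq 0$, so $nI_n\to\kappa\int_\Omega f\phi'$ by weak convergence; the duality conclusion then gives $W^{1,p}$ for $p>1$ and $BV$ for $p=1$. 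The one detail to add is that, $\phi$ being compactly supported in $\Omega$ and $\rho$ compactly supported, for $n$ large the integrals over $\Omega\times\Omega$ and over $\rr\times\rr$ coincide on the set where $\phi(\tfrac{x+y}{2})\neq 0$, which is what legitimizes moving the difference from $f_n$ onto $\phi$. The interior part of your Part 2 is also sound: monotonicity gives $\rho_n\geq nc_0$ on $|z|\leq r_0/n$, hence the averaged bound $M/(c_0n^{p+1})$ at scale $\delta_n$; the chaining with $m=\lceil\delta/\delta_n\rceil$ steps gives $C\delta^{p+1}$ at every scale $\delta$ for $n\geq r_0/\delta$; Jensen plus Rellich for $\{A_\delta f_n\}_n$ then yields compactness in $L^p(\Omega')$, the finitely many leftover indices forming a finite, hence compact, set.

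The genuine gap is the boundary step. As literally stated, ``applying the same translation estimate in the inward normal direction controls the mass in a boundary collar uniformly in $n$'' is false: for $f_n\equiv 1$ on $\Omega$ the mass in any collar stays bounded away from zero, yet the sequence is trivially compact, so no estimate of the form $\sup_n\|f_n\|_{L^p(\mathrm{collar})}\to 0$ can hold, nor is one needed. What inward translation actually buys is not smallness of mass but uniform approximation of $f_n$ near $\partial\Omega$ by a relatively compact family. The repair is straightforward in the one-dimensional setting of the theorem: each component of a smooth bounded $\Omega\subset\rr$ is an interval, hence convex, so your chaining estimate holds up to the boundary --- for $x\in\Omega\cap(\Omega-z)$ the whole segment $[x,x+z]$ lies in $\Omega$, so all intermediate translates are admissible --- giving $\int_{|z|<\delta}\|\tau_z f_n-f_n\|^p_{L^p(\Omega\cap(\Omega-z))}\,dz\leq C\delta^{p+1}$ with no interior restriction. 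Then replace the symmetric average $A_\delta$ by one-sided (inward) averages on each half of each component: these piecewise one-sided averages are bounded in $W^{1,p}$ on each piece for fixed $\delta$, hence relatively compact in $L^p(\Omega)$, and Jensen gives $\sup_{n\geq r_0/\delta}\|f_n-g_n^\delta\|_{L^p(\Omega)}\lesssim\delta$. With this substitution (and deletion of the boundary charts and collar-mass language, which are both unnecessary and incorrect in this form), your proof of Part 2 closes.
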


\begin{remark}
We point out that the compactness assumption on the function $\rho$ could be removed. In fact once we have estimate \eqref{ros.1} for $\rho$ we also have it for any other compactly supported function $\tilde \rho$ with $\tilde \rho\leq \rho$.
\end{remark}

%
%
%

Using the same arguments as in the proof of Theorem \ref{rossi} (see \cite[Ch.~6, p.~128]{1214.45002}) we obtain the following result.
\begin{theorem}\label{rossi-time}
Let $1\leq p<\infty$ and $\Omega\subset \rr$ open. Let $\rho:\rr\rightarrow \rr$ be a nonnegative smooth continuous radial functions with compact support, non identically zero, and $\rho_n(x)=n\rho(nx)$. Let $\{f_n\}_{n\geq 1}$ be a bounded sequence of functions in $L^p((0,T)\times \Omega)$ such that
\begin{equation}\label{ros.1.time}
\int _0^T\int _{\Omega}\int _{\Omega} \rho_n(x-y) |f_n(x)-f_n(y)|^pdxdy\leq \frac {M}{n^p}.
\end{equation}
If  $\{f_n\}_{n\geq 1}$ is weakly convergent in $L^p((0,T)\times \Omega)$ to $f$ then $f\in L^p((0,T),W^{1,p}(\Omega))$.
%
%
%
\end{theorem}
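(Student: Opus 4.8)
The plan is to follow the proof of Theorem~\ref{rossi}, part~1, treating the time variable as a passive parameter: the quadratic form in \eqref{ros.1.time} is nonlocal only in space, so the integration in $t$ is simply carried along throughout. First I would put the hypothesis into difference-quotient form. Writing $\rho_n(x-y)=n\rho(n(x-y))$ and substituting $z=n(x-y)$ in \eqref{ros.1.time}, the bound becomes
\begin{equation*}
\int_0^T\int_\Omega\int_{\rr}\rho(z)\Big|\frac{f_n(t,x)-f_n(t,x-z/n)}{z/n}\Big|^p\,|z|^p\,dz\,dx\,dt\le M.
\end{equation*}
Setting $G_n(t,x,z):=\big(f_n(t,x)-f_n(t,x-z/n)\big)/(z/n)$ and introducing the finite, nonzero measure $d\nu(z):=\rho(z)|z|^p\,dz$ (nonzero because $\rho\ge0$ is continuous, compactly supported and not identically zero), this says exactly that $\{G_n\}$ is bounded in $L^p\big((0,T)\times\Omega\times\rr,\,dt\,dx\,d\nu\big)$.

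For $1<p<\infty$ I would then use reflexivity to pass to a subsequence with $G_n\rightharpoonup G$ weakly in that space, and the core of the argument is to identify $G$ with $\partial_x f$. For $\phi\in C_c^\infty((0,T)\times\Omega)$ and $\eta\in C_c(\rr)$, a change of variables gives
\begin{equation*}
\int_0^T\!\!\int_\Omega G_n(t,x,z)\,\phi(t,x)\,dx\,dt=\int_0^T\!\!\int_\Omega f_n(t,x)\,\frac{\phi(t,x)-\phi(t,x+z/n)}{z/n}\,dx\,dt.
\end{equation*}
The difference quotients of $\phi$ converge strongly in $L^{p'}$ (with $p'$ the conjugate exponent) to $-\partial_x\phi$, with $L^{p'}$-norm controlled by $\|\partial_x\phi\|_{L^{p'}}$ uniformly in $z$ and $n$; since $f_n\rightharpoonup f$ weakly in $L^p$, weak-strong convergence makes the right-hand side converge for each fixed $z$ to $-\int_0^T\!\int_\Omega f\,\partial_x\phi$, a limit bounded uniformly in $z$. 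Testing the weak convergence $G_n\rightharpoonup G$ against $\phi(t,x)\eta(z)$ and using dominated convergence in $z$ over the finite measure $\nu$, I obtain
\begin{equation*}
\int_0^T\!\!\int_\Omega\!\int_{\rr}G\,\phi\,\eta\,d\nu\,dx\,dt=-\Big(\int_{\rr}\eta\,d\nu\Big)\int_0^T\!\!\int_\Omega f\,\partial_x\phi\,dx\,dt
\end{equation*}
for every admissible $\eta$. Hence $\int\!\int G(t,x,z)\,\phi\,dx\,dt$ is $\nu$-almost everywhere independent of $z$, and the average $\bar G(t,x):=\nu(\rr)^{-1}\int_{\rr}G\,d\nu$ satisfies $-\int\!\int f\,\partial_x\phi=\int\!\int\bar G\,\phi$ for all $\phi$, i.e. $\partial_x f=\bar G$ in the distributional sense.

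Finally I would bound $\bar G$ in $L^p$. By Jensen's inequality for the probability measure $\nu/\nu(\rr)$ and weak lower semicontinuity of the $L^p$ norm,
\begin{equation*}
\|\bar G\|_{L^p((0,T)\times\Omega)}^p\le\frac{1}{\nu(\rr)}\int_0^T\!\!\int_\Omega\!\int_{\rr}|G|^p\,d\nu\,dx\,dt\le\frac{1}{\nu(\rr)}\liminf_{n\to\infty}\int_0^T\!\!\int_\Omega\!\int_{\rr}|G_n|^p\,d\nu\,dx\,dt\le\frac{M}{\nu(\rr)},
\end{equation*}
so $\partial_x f\in L^p((0,T)\times\Omega)$; combined with $f\in L^p$ this yields $f\in L^p((0,T),W^{1,p}(\Omega))$. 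I expect the identification of the weak limit to be the delicate step, since one must verify that the limit of the difference quotients really is $\partial_x f$ and, crucially, does not depend on the auxiliary direction $z$. Note also that the reflexivity used to produce $G_n\rightharpoonup G$ breaks down at $p=1$, where the same scheme only gives $f(t,\cdot)\in BV(\Omega)$ in general, matching the dichotomy already present in Theorem~\ref{rossi}.
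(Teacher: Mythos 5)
Your proposal is correct (modulo the standard technicality that after the change of variables the $z$-integration runs over $\{z:\ x-z/n\in\Omega\}$ rather than all of $\rr$, which is harmless because $\rho$ has compact support and your test functions sit compactly inside $\Omega$), and it is essentially the proof the paper intends: the paper omits the argument for Theorem \ref{rossi-time}, saying only that it follows the lines of Theorem \ref{rossi} in \cite{1214.45002}, and that argument is exactly your scheme --- pass the difference quotient onto the test function, exploit the weak convergence of $f_n$, and average in $z$ against the measure $\rho(z)|z|^p\,dz$ --- with the time variable carried along as a passive parameter. Your closing caveat about $p=1$ is also apt: reflexivity fails there and this method (like Theorem \ref{rossi} itself, which only yields $BV$ at $p=1$) cannot give the stated $W^{1,1}$ conclusion, so the theorem as printed is too strong at $p=1$; this is immaterial for the paper, which only invokes the result with $p=2$ in the proof of Proposition \ref{main}.
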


The proof of this theorem follows the lines of Theorem \ref{rossi} in \cite{1214.45002} and we will omit it here.
The result in Theorem \ref{rossi-time} will be used later in Section \ref{proofs} in the proof of the main result of this paper  - Theorem \ref{asimp}.

\section{Proof of the main result}\label{proofs}

We first state some results regarding the behavior of the $L^p(\rr)$-norms of the solutions of system \eqref{gas}. We refer to \cite{Ignat:2013fk, MR2356418} for similar results in the case of the equation
$$u_t=J\ast u-u+G\ast |u|^{q-1}u-|u|^{q-1}u, \quad q\geq 2.$$

\begin{theorem}\label{decay.simple}
For any $\varphi\in L^1(\rr)\cap L^\infty(\rr)$ and $1\leq p<\infty$ the solution of system \eqref{gas} satisfies:
\begin{equation}\label{decay.u.1}
\|u(t)\|_{L^p(\rr)}\leq C(p,J,\|\varphi\|_{L^1(\rr)}, \|\varphi\|_{L^\infty(\rr)})(1+t)^{-\frac 12(1-\frac 1p)}, \ t\geq 0.
\end{equation}
\end{theorem}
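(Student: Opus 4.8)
The plan is to establish the decay estimate \eqref{decay.u.1} by an interpolation-plus-energy argument, exploiting the $L^1$ and $L^\infty$ contraction properties of the nonlocal conservation law together with an $L^2$ decay bound that I would obtain from a nonlocal energy identity. First I would record the basic a priori bounds. Since $u$ is the entropy solution furnished by Theorem \ref{ex-latt}, the standard $L^1$-contraction and comparison principle for \eqref{claws} give $\|u(t)\|_{L^1(\rr)}\leq \|\varphi\|_{L^1(\rr)}$ and $\|u(t)\|_{L^\infty(\rr)}\leq \|\varphi\|_{L^\infty(\rr)}$ for all $t\geq 0$, so the endpoints $p=1$ and $p=\infty$ of \eqref{decay.u.1} hold trivially (with the convention that the exponent vanishes at $p=1$). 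The whole estimate for intermediate $p$ will then follow by interpolation once I control a single interior exponent, for which $p=2$ is the natural choice.

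Next I would derive the $L^2$ decay. Multiplying the equation in \eqref{gas} by $u$ and integrating, the convective term $(u^q)_x$ contributes $\int u\,(u^q)_x\,dx$, which integrates to zero on $\rr$ for the $L^1\cap L^\infty$ solution (the flux $\tfrac{q}{q+1}u^{q+1}$ vanishes at infinity), so it does not affect the energy balance; this is the key point that makes the nonlinear and linear cases uniform. The remaining quadratic form is the nonlocal Dirichlet form
\begin{equation}
\frac{d}{dt}\,\frac12\|u(t)\|_{L^2(\rr)}^2 = \int_\rr u\,(J\ast u-u)\,dx = -\frac14\int_\rr\int_\rr J(x-y)\,|u(x)-u(y)|^2\,dx\,dy \leq 0,
\end{equation}
using the symmetry of $J$ and mass one. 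To convert this dissipation into an algebraic decay rate I would use the Fourier representation: the symbol of the linear operator $J\ast\cdot-I$ is $\widehat J(\xi)-1$, which behaves like $-A\xi^2$ near $\xi=0$ because $J\in L^1(1+|x|^2)$ and $A=\tfrac12\int z^2 J\,dz$. The $L^1$ bound gives $\|\widehat u(t)\|_{L^\infty}\leq \|\varphi\|_{L^1}$ uniformly, and splitting the Plancherel integral for $\|u(t)\|_{L^2}^2$ into low and high frequencies against the dissipation rate yields $\|u(t)\|_{L^2(\rr)}\leq C(1+t)^{-1/4}$, which is exactly \eqref{decay.u.1} at $p=2$.

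Finally I would interpolate. For $1\leq p\leq 2$, Hölder (or the $L^p$ interpolation inequality) between the conserved $L^1$ bound and the $L^2$ decay gives $\|u(t)\|_{L^p}\leq \|u(t)\|_{L^1}^{2/p-1}\,\|u(t)\|_{L^2}^{2-2/p}\leq C(1+t)^{-\frac12(1-1/p)}$, and for $2\leq p<\infty$ interpolation between the $L^2$ decay and the bounded $L^\infty$ norm gives the same rate; in both ranges the exponent matches $-\tfrac12(1-\tfrac1p)$, and the constant depends only on $p$, $J$, $\|\varphi\|_{L^1}$ and $\|\varphi\|_{L^\infty}$ as claimed. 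The main obstacle I anticipate is making the $L^2$-decay step fully rigorous at the level of the entropy solution rather than a smooth one: the energy identity and the Fourier splitting are formally immediate, but one must justify them for the merely $L^1\cap L^\infty$ entropy solution, which I would handle by running the computation on the smooth vanishing-viscosity approximants from \cite{1052.35126} (for which the flux term vanishes identically and the added viscosity only improves dissipation) and then passing to the limit, the bounds being uniform in the viscosity parameter.
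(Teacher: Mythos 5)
Your derivation of the $p=2$ case is essentially the paper's: the energy identity with the nonlocal Dirichlet form, the quadratic behaviour of $1-\hat J(\xi)$ near $\xi=0$, and Schonbek's Fourier splitting, justified on the vanishing-viscosity approximants. The interpolation for $1\leq p\leq 2$ between the conserved $L^1$ norm and the $L^2$ decay is also correct. However, there is a genuine gap in the range $2<p<\infty$: you interpolate the $L^2$ decay against the $L^\infty$ bound, but the only $L^\infty$ information you have established is the non-decaying bound $\|u(t)\|_{L^\infty(\rr)}\leq \|\varphi\|_{L^\infty(\rr)}$. The resulting estimate is
\begin{equation*}
\|u(t)\|_{L^p(\rr)}\leq \|u(t)\|_{L^2(\rr)}^{2/p}\,\|u(t)\|_{L^\infty(\rr)}^{1-2/p}\lesssim (1+t)^{-\frac{1}{2p}},
\end{equation*}
which is strictly weaker than the claimed rate $(1+t)^{-\frac12(1-\frac1p)}$ whenever $p>2$ (for instance at $p=4$ you get $t^{-1/8}$ instead of $t^{-3/8}$, and as $p\to\infty$ your exponent tends to $0$ while the correct one tends to $\frac12$). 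Interpolation can never close this gap, because the full statement of the theorem at large $p$ is essentially equivalent to $L^\infty$ decay of order $t^{-1/2}$, which is precisely what you do not have.

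This is where the paper's proof genuinely differs from yours: for $p>2$ it runs an \emph{inductive} energy argument rather than interpolation. One uses the $L^p$ energy inequality
\begin{align*}
\frac {d}{dt}\int _{\rr}u^p(t,x)dx&=-p \int _{\rr}(J\ast u-u)u(t,x)^{p-1}dx\\
&\leq -\frac{2(p-1)}{p}\int _{\rr}\int _{\rr} J(x-y)\big(u(t,x)^{p/2}-u(t,y)^{p/2}\big)^2dxdy,
\end{align*}
and applies the Fourier-splitting device to the function $w=u^{p/2}$: the low-frequency part of $\hat w$ is controlled by $\|w\|_{L^1(\rr)}=\|u\|_{L^{p/2}(\rr)}^{p/2}$, whose decay is known from the previous step of the induction. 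Bootstrapping along dyadic exponents $p=2,4,8,\dots$ (and interpolating in between) yields \eqref{decay.u.1} for all $1\leq p<\infty$; the details are as in \cite{MR2356418}. To repair your proof you should replace the $L^2$--$L^\infty$ interpolation step for $p>2$ by this induction; the rest of your argument (the a priori $L^1$ and $L^\infty$ bounds, the vanishing of the flux term, and the vanishing-viscosity justification) is sound and consistent with the paper.
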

\begin{proof}
The main idea of the proof  is to adapt the so-called Fourier splitting methods used previously by Schonbek \cite{MR1356749}  (see also \cite{MR2103702}, p. 26).
Multiplying the equation \eqref{gas} by $u$ and integrating by parts we get
\[
\frac {d}{dt}\int _{\rr}u^2(t,x)dx+\frac 12\int _{\rr}\int _{\rr}J(x-y)(u(x)-u(y))^2dxdx=0.
\]
Remark that the assumptions on $J$ guarantee that its Fourier transform satisfies
\[
\left\{
\begin{array}{ll}
\hat J(\xi)\leq 1-\frac {\xi^2}2,& |\xi |\leq R,\\[10pt]
\hat J(\xi)\leq 1-\delta, & |\xi|\geq R.
\end{array}
\right.
\]
  The first inequality above is a consequence of the fact the $J$ has a second momentum. The second one is a consequence of the fact that
$J$ is nonnegative and has mass one.
Proceeding as in \cite{MR2356418} we find that
\begin{equation}\label{decay.u.2}
\|u(t)\|_{L^2(\rr)}\lesssim \frac{\|\varphi\|_{L^2(\rr)}}{(t+1)^{1/2}}+\frac{\|\varphi\|_{L^1(\rr)}}{(t+1)^{1/4}},
\end{equation}
which proves estimate \eqref{decay.simple} for $p=2$.

The general case follows by an inductive argument and using that
\begin{align*}
\frac {d}{dt}\int _{\rr}u^p(t,x)dx&=-p \int _{\rr}(J\ast u-u)u(t,x)^{p-1}dx\\
&\leq -\frac{2(p-1)}{p}\int _{\rr}\int _{\rr} J(x-y)(u(t,x)^{p/2}-u(t,y)^{p/2})^2dxdy.
\end{align*}
The complete details are given in \cite{MR2356418}.
\end{proof}

\begin{proof}[Proof of Theorem \ref{asimp}]
In order to obtain the long time behavior of the solutions of system \eqref{gas} we use the method of self-similar solutions.
Let us introduce the scaled functions
$$u_\lambda(t,x)=\lambda u(\lambda^2t,\lambda x),\, \varphi_\lambda(x)=\lambda \varphi(\lambda x)  \quad \text{and} \quad J_\la(x)=\la J(\la x).$$
 Then $u_\la$ satisfies the system
\begin{equation}\label{gas-scalat}
\left\{
\begin{array}{ll}
u_{\la,t}=\la^2 (J_\la\ast u_\la-u_\la)-\la ^{2-q}(u_\la^q)_x,& x\in \rr,t>0,\\[10pt]
u_\la(0,x)=\varphi_\la(x),& x\in \rr.
\end{array}
\right.
\end{equation}
We remark that for $p=1$, limit \eqref{lim.t} is equivalent with the fact that
\begin{equation}\label{limit.u.lambda}
\lim _{\lambda\rightarrow \infty}\|u_\lambda(1)-u(1)\|_{L^1(\rr)}=0.
\end{equation}
In the following we will prove \eqref{limit.u.lambda}. The case $p\in (1,\infty)$ will be obtained in
Step IV of our proof.

%

%
%


\medskip

\textbf{Step 1. Compactness of $\{u_\lambda\}_{\la>0}$ in $L^1_{loc}((0,\infty)\times \rr)$.}
Using Theorem \ref{decay.simple} we have that for any $1\leq p<\infty$ there exists a positive constant $C=C(p, J, \|\varphi\|_{L^1(\rr)}, \|\varphi\|_{L^\infty(\rr)})$
such that $u_\lambda$ the solution of system \eqref{gas} satisfies
\begin{equation}\label{decay.u.lam}
\|u_\la(t)\|_{L^p(\rr)}\leq \frac{C\lambda ^{1-\frac 1p}}{(1+t\lambda^2)^{\frac 12(1-\frac 1p)}}\leq C t^{-\frac 12(1-\frac 1p)}.
\end{equation}
Hence  for any $0<t_1<t_2$, the family $\{u_\la\}_{\lambda>0}$ is uniformly bounded in $L^\infty((t_1,t_2),L^2(\rr))$.
Moreover if we multiply equation \eqref{gas-scalat} by $u_\la$ and integrate on the space variable $x$ we obtain that
\begin{align*}
\frac 12\frac {d}{dt}\int _{\rr}u_\la^2(t,x)dx&=\la^2 \int _{\rr} (J_\la\ast u_\la(t)-u_\la(t))u_\la(t)dx\\
&=-\frac {\lambda^2}2
\int _{\rr}\int _{\rr} J_\la (x-y)(u_\la(t,x)-u_\la(t,y))^2dxdy.
\end{align*}
Hence for any $0<t_1<t_2$  the following identity holds
\begin{equation}\label{energy.est}
\int _{\rr}u_\la^2(t_2,x)dx+ {\lambda^2} \int _{t_1}^{t_2}\int _{\rr}\int _{\rr} J_\la (x-y)(u_\la(t,x)-u_\la(t,y))^2dxdydt=
\int _{\rr}u_\la^2(t_1,x)dx.
\end{equation}
We now estimate the $H^{-1}(\rr)$-norm of $u_{\lambda,t}$. We need the following Lemma.

\begin{lemma}\label{est.ariba}For any $\rho\in L^1(\rr,|x|^2)$
the following inequality
\begin{equation}\label{ineg-1}
\lambda ^3\iint _{\rr^{2}} \rho(\lambda (x-y)(\varphi(x)-\varphi(y))^2dxdy\leq \int _{\rr} \rho(x)|x|^2 dx\int _{\rr}  \varphi_x^2dx
\end{equation}
holds for all $\lambda >0$ and  $\varphi\in H^1(\rr)$.
\end{lemma}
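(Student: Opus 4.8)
The plan is to exploit the fundamental theorem of calculus to write the increment $\varphi(x)-\varphi(y)$ as an average of $\varphi_x$ along the segment joining $y$ and $x$, and then to observe that, after a change of variables, the resulting expression is a translation-invariant integral of $\varphi_x^2$. The scaling in $\lambda$ then cancels exactly against the second moment of $\rho$, producing the stated constant.

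First I would reduce to the case $\varphi\in C_c^\infty(\rr)$ by density, postponing the limiting argument. For smooth $\varphi$ I would write
\[
\varphi(x)-\varphi(y)=(x-y)\int_0^1 \varphi_x\big(y+t(x-y)\big)\,dt,
\]
and apply Jensen's inequality (equivalently Cauchy--Schwarz on $[0,1]$) to obtain
\[
(\varphi(x)-\varphi(y))^2\leq (x-y)^2\int_0^1 \varphi_x\big(y+t(x-y)\big)^2\,dt.
\]
Substituting this bound into the left-hand side of \eqref{ineg-1}, I would change variables $z=x-y$ (with $y$ held fixed) and apply Fubini to integrate in $y$ first:
\[
\lambda^3\iint_{\rr^2}\rho(\lambda(x-y))(\varphi(x)-\varphi(y))^2\,dx\,dy
\leq \lambda^3\int_{\rr}\rho(\lambda z)\,z^2\int_0^1\Big(\int_{\rr}\varphi_x(y+tz)^2\,dy\Big)\,dt\,dz.
\]
The inner integral in $y$ is translation invariant, so for every fixed $t$ and $z$ it equals $\int_{\rr}\varphi_x^2\,dx$, and the $t$-integral then contributes a factor $1$. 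What remains is
\[
\lambda^3\Big(\int_{\rr}\rho(\lambda z)\,z^2\,dz\Big)\int_{\rr}\varphi_x^2\,dx,
\]
and the substitution $w=\lambda z$ turns $\lambda^3\int_{\rr}\rho(\lambda z)z^2\,dz$ into $\int_{\rr}\rho(w)|w|^2\,dw$, which is exactly the claimed right-hand side.

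The only genuinely delicate point is the passage from smooth functions to general $\varphi\in H^1(\rr)$, since the pointwise identity above requires an absolutely continuous representative. I would take $\varphi_n\in C_c^\infty(\rr)$ with $\varphi_n\to\varphi$ in $H^1(\rr)$; the inequality holds for each $\varphi_n$, the right-hand side converges to $\int_{\rr}\rho(x)|x|^2\,dx\int_{\rr}\varphi_x^2\,dx$, and on the left I would pass to the limit by Fatou's lemma combined with the (subsequential, a.e.) convergence $\varphi_n(x)-\varphi_n(y)\to\varphi(x)-\varphi(y)$. This extends \eqref{ineg-1} to all $\varphi\in H^1(\rr)$ and all $\lambda>0$. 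Each step is elementary, and I do not expect any serious obstacle beyond carrying out this routine density argument cleanly.
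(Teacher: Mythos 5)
Your proof is correct and follows essentially the same route as the paper's: the fundamental theorem of calculus representation of the increment, Cauchy--Schwarz/Jensen on the $s$-integral, and Fubini with a change of variables to factor out $\int_{\rr}\rho(z)z^2\,dz\int_{\rr}\varphi_x^2\,dx$ (the paper merely rescales to $\lambda=1$ first, whereas you carry $\lambda$ through and rescale at the end). Your added density/Fatou step for general $\varphi\in H^1(\rr)$ is a harmless extra precaution that the paper omits, since $H^1(\rr)$ functions have absolutely continuous representatives for which the identity holds directly.
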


\begin{proof}[Proof of Lemma \ref{est.ariba}]
Making changes of variables it is sufficient to consider the case $\la=1$. Since
$$\varphi(x)-\varphi(y)=\int _0^1 (x-y)\cdot  \varphi' (y+s(x-y))ds$$
we get that
\begin{align*}
\iint _{\rr^{2}} \rho(x-y)(\varphi(x)-\varphi(y))^2dxdy&\leq \iint _{\rr^{2d}} \rho(x-y)|x-y|^2  \int _0^1 | \varphi' (y+s(x-y))|^2dsdxdy\\
&= \int _{\rr}\rho(z)z^2dz \int _{\rr} \varphi_x^2dx,
\end{align*}
which proves the desired inequality.
\end{proof}

Denoting by $\langle , \rangle _{-1,1}$ the duality product we obtain that
the time derivative of  $u_{\la}$ verifies
\begin{align*}
 \langle u_{\la,t} &,\varphi\rangle  _{-1,1}=  \langle \la^2 (J_\la\ast u_\la -u_\la)-(u_\la^q)_x,\varphi\rangle  _{-1,1}\\
 &=-\frac {\la^2}2 \int _{\rr} \int _{\rr}  J_\la(x-y)(u_\la(x)-u_\la(y))(\varphi(x)-\varphi(y))dxdy-\int _{\rr} u_\la^q\varphi_x dx\\
 &\lesssim \Big(\la^2 \int _{\rr} \int _{\rr}  J_\la(x-y)(u_\la(x)-u_\la(y))^2\Big)^{1/2}
 \Big(\la^2 \int _{\rr} \int _{\rr}  J_\la(x-y)(\varphi(x)-\varphi(y))^2\Big)^{1/2}\\
 &\quad + \|u_\la ^q\|_{L^2(\rr)}\|\varphi\|_{H^1(\rr)}.
\end{align*}
Using  estimate \eqref{decay.u.lam}  and identity \eqref{energy.est} we obtain that  $\{u_{\la,t}\}_{\lambda>0}$ is  uniformly bounded in $L^2((t_1,t_2),H^{-1}(\rr))$.

The above estimates prove that there exists $M=M(t_1,\|\varphi\|_{L^1(\rr)},\|\varphi\|_{L^{\infty}(\rr)})$ such that
\begin{equation}\label{cond.1}
\|u_\la\|_{L^\infty([t_1,t_2],\, L^2(\rr))}\leq M,
\end{equation}
\begin{equation}\label{cond.2}
\la^2\int _{t_1}^{t_2}\int _{\rr}\int _{\rr} J_\la (x-y)(u_\la(t,x)-u_\la(t,y))^2dxdydt\leq M
\end{equation}
and
\begin{equation}\label{cond.3}
\|u_{\la,t}\|_{L^2([t_1,t_2],H^{-1}(\rr))}\leq M.
\end{equation}

The above estimates bring back to our mind the  Aubins-Lions compactness criterion given in Theorem \ref{3spaces} in Section \ref{prelim} applied on the spaces $H^1, L^2$ and $H^{-1}$.
However, instead of the $H^1$-norm, in \eqref{cond.2} we have an integral estimate that seems very similar to the ones used in \cite{1103.46310} to characterize functions in Sobolev spaces.
We will adapt  the proof of Theorem \ref{3spaces} to our nonlocal setting and provide a nonlocal compactness criterion in the following proposition. The proof of Proposition \ref{main} will be given later.

\begin{proposition}\label{main}Let $\rho$ be a function as in Theorem \ref{rossi-time} and $\rho_n(x)=n\rho(nx)$. Assume
sequence $\{f_n\}_{n\geq 1}$ has the following properties
\begin{equation}\label{hyp.1}
\|f_n\|_{L^\infty((0,T),L^2(\rr))}\leq M,
\end{equation}
\begin{equation}\label{hyp.2}
n^2\int _0^T \int_{\rr}\int _{\rr} \rho_n(x-y)(f_n(t,x)-f_n(t,y))^2dxdydt\leq M
\end{equation}
and
\begin{equation}\label{hyp.3}
\|\partial_tf_n\|_{L^2((0,T),H^{-1}(\rr))}\leq M.
\end{equation}
Then there exists a function $f\in L^2((0,T),H^1(\rr))$ such that, up to a subsequence,
\begin{equation}\label{conclusion}
f_n\rightarrow f\quad \text{in}\quad  L^2_{loc}((0,T)\times \rr).
\end{equation}
\end{proposition}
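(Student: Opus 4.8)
The plan is to mimic the proof of the Aubin--Lions criterion (Theorem \ref{3spaces}), replacing the compact embedding $H^1\hookrightarrow L^2$ and the interpolation inequality \eqref{ineg.eps} by their nonlocal analogues. Concretely, I want to establish a nonlocal balancing inequality of the form
\begin{equation}\label{plan.balance}
\|g\|_{L^2(\Omega)}\leq \vp\,\Big(n^2\int_\Omega\int_\Omega \rho_n(x-y)(g(x)-g(y))^2\,dx\,dy\Big)^{1/2}+\eta(\vp,n)\,\|g\|_{H^{-1}(\Omega)},
\end{equation}
which is exactly the nonlocal substitute for \eqref{ineg.eps}; this is what the paper announces as Lemma \ref{balance}. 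Granting such an inequality, the compactness of $\{f_n\}$ in $L^2_{loc}((0,T)\times\rr)$ follows from the characterization in Theorem \ref{simon} applied with $B=L^2(\Omega)$ for a fixed bounded open $\Omega$, and then exhausting $\rr$ by a countable family of such $\Omega$ together with a diagonal argument.

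I would carry out the verification of the two conditions of Theorem \ref{simon} as follows. \textbf{Condition \eqref{condition.1}:} for $0<t_1<t_2<T$ the time-averaged functions $\int_{t_1}^{t_2} f_n(t)\,dt$ must be relatively compact in $L^2(\Omega)$. Here hypotheses \eqref{hyp.1} and \eqref{hyp.2} give uniform $L^2$-boundedness together with a uniform bound on the nonlocal Gagliardo-type energy, so by part 2 of Theorem \ref{rossi} (the nonlocal compactness theorem) the time averages lie in a relatively compact set. \textbf{Condition \eqref{condition.2}:} the time-translation estimate $\|\tau_h f_n-f_n\|_{L^2(0,T-h;L^2(\Omega))}\to 0$ uniformly in $n$. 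For this I would use \eqref{plan.balance} to dominate the $L^2$-norm of $\tau_h f_n-f_n$ by an $\vp$-fraction of its nonlocal energy (controlled uniformly via \eqref{hyp.2}) plus an $\eta$-multiple of its $H^{-1}$-norm; the latter is estimated by the time-derivative bound \eqref{hyp.3} through the elementary inequality $\|\tau_h g-g\|_{H^{-1}}\leq h^{1/2}\|\partial_t g\|_{L^2(\cdot;H^{-1})}$. Choosing $\vp$ small and then $h$ small makes the whole quantity uniformly small, which is the standard Aubin--Lions balancing. Finally, once compactness in $L^2_{loc}$ is secured, the limit $f$ lies in $L^2((0,T),H^1(\rr))$ by Theorem \ref{rossi-time}, applied to the weakly convergent sequence whose nonlocal energy is bounded by \eqref{hyp.2}.

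The main obstacle is proving the nonlocal balancing inequality \eqref{plan.balance}, since there is no smoothing semigroup and no genuine derivative to interpolate with; the role of the ``intermediate'' norm is played by the nonlocal quadratic form $n^2\!\iint\!\rho_n(x-y)(g(x)-g(y))^2$. I expect to obtain \eqref{plan.balance} by a Fourier/frequency-decomposition argument: low frequencies of $g$ are controlled by the $H^{-1}$-norm (since on $|\xi|\lesssim R$ the $H^{-1}$ and $L^2$ norms are comparable up to a constant depending on $R$), while high frequencies are absorbed by the nonlocal form, using that $n^2(1-\widehat{\rho_n}(\xi))$ is bounded below by a positive multiple of $|\xi|^2$ on a range of frequencies that grows with $n$; balancing the cutoff frequency against $\vp$ and $n$ yields the stated form with $\eta$ depending on $\vp$ and $n$. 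Making this frequency split quantitative and uniform in $n$ is the technical heart of the argument, and it is precisely the point where the proof departs from the classical Aubin--Lions scheme.
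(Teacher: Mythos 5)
Your strategy is viable and uses the same two key ingredients as the paper -- the nonlocal compactness theorem (Theorem \ref{rossi}) for the time averages, and a Fourier-proved balancing inequality playing the role of \eqref{ineg.eps} -- but it organizes them differently. You apply Simon's criterion (Theorem \ref{simon}) once, with $B=L^2(\Omega)$, checking condition \eqref{condition.2} by applying the balancing inequality to the increments $\tau_h f_n-f_n$, whose nonlocal energy is bounded via \eqref{hyp.2} and whose $H^{-1}$-norm is $O(h)$ via \eqref{hyp.3}. The paper instead argues in two steps: first Simon's criterion with $B=H^{-1}(\Omega)$, where condition \eqref{condition.2} is immediate from \eqref{hyp.3} alone, giving strong convergence $f_n\to f$ in $L^2((0,T),H^{-1}(\Omega))$; then an upgrade to $L^2_{loc}$ by applying its Lemma \ref{balance} to the localized differences $\chi(f-f_n)$, which is what forces the extra commutator estimates of Lemmas \ref{local.sup} and \ref{est.ariba}. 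Your route is closer to the classical Aubin--Lions scheme and dispenses with those localization lemmas entirely, since you can apply the balancing inequality globally in space (the hypotheses \eqref{hyp.2}--\eqref{hyp.3} are global, so nothing is lost by estimating $\|\cdot\|_{L^2(\Omega)}\le\|\cdot\|_{L^2(\rr)}$); the paper's route trades that for a completely soft verification of the time-translation condition. The identification $f\in L^2((0,T),H^1(\rr))$ via Theorem \ref{rossi-time} is the same in both.

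Two points must be repaired before your argument closes. First, state and prove the balancing inequality on all of $\rr$, not on $\Omega$: your frequency-splitting proof uses Plancherel and hence needs the whole space (this is exactly the paper's Lemma \ref{balance}); as noted above, restricting afterwards costs nothing. Second, and crucially, the constant in front of the $H^{-1}$-term must \emph{not} depend on $n$. The correct outcome of the frequency split is
\begin{equation*}
\|g\|_{L^2(\rr)}^2\leq \varepsilon\, n^2\int_{\rr}\int_{\rr}\rho_n(x-y)\big(g(x)-g(y)\big)^2dxdy+\frac{2}{\varepsilon}\,\|g\|^2_{H^{-1}(\rr)}
\qquad\text{for all } n\ge (\delta\varepsilon)^{-1/2},
\end{equation*}
with $2/\varepsilon$ uniform in $n$; the dependence on $n$ enters only through the threshold of validity, not through the constant. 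As you wrote it, with $\eta=\eta(\varepsilon,n)$ allowed to vary with $n$, the bound $\eta\cdot O(h)$ on the $H^{-1}$ contribution need not tend to $0$ uniformly in $n$ as $h\to 0$, and condition \eqref{condition.2} of Theorem \ref{simon} -- which demands uniformity over the whole family -- would not be verified. With the $n$-uniform version one chooses $\varepsilon$ small, then $h$ small, to handle all $n\ge(\delta\varepsilon)^{-1/2}$ at once, and the finitely many remaining indices are covered by the $L^2$-continuity of translations for each fixed $f_n$. With these two corrections your proof is complete.
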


\begin{remark}
The main difference between the results in Proposition \ref{main} and the ones in Theorem \ref{3spaces} is that  instead of having a fix space $X=H^1(\Omega)$ that is compactly embedded in $B=L^2(\Omega)$ we have in \eqref{hyp.2} a family of nonlocal quadratic forms. A more general result involving the $L^p$-norms has recently been obtained  in \cite{Ignat:2013fk}.
\end{remark}

Using that  $J$ is a nonnegative  continuous function with mass one we can choose a function $\rho$ as in Theorem \ref{rossi-time}
such that $\rho\leq J$. Thus we can
 apply Proposition \ref{main} to the time interval $[t_1,t_2]$ and to the family $\{u_\lambda\}_{\la>0}$.
It follows that there exists a function $\ou\in L^2([t_1,t_2],H^1(\rr))$ such that
 $u_\la\rightarrow \ou$  in $ L^2([t_1,t_2];L^2_{loc}( \rr))$.
 This shows that
 $$u_\la\rightarrow \ou \ \text{in} \ L^1_{loc}((0,\infty)\times  \rr))$$
and, therefore,
$$u_\lambda \rightarrow u \mbox{ a.e. in } (0,\infty) \times \rr.$$
This is not sufficient to pass to the limit in the equation, since the last convergence was not obtained for all $t>0$.
To overcome this difficulty  we will prove that for any $t>0$, sequence $u_\lambda(t)$ weekly converges to $\ou(t)$ in $L^2(\rr)$.
%
The estimates obtained above
yield for $\{u_{\lambda,t}\}$ a bound in $L^2_{loc}((0,\infty),H^{-1}_{loc}(\rr))$. As a consequence
of \eqref{cond.1} we also have that $\{u_\lambda\}$ is bounded in $L^\infty_{loc}((0,\infty), L^2_{loc}(\rr)$.
Taking into account that $L^2_{loc}(\rr)$ is compactly embedded in $H^{-\varepsilon}_{loc}(\rr)$ for any $\varepsilon > 0$
and that for any $0<\eps<1$ the space
  $H^{-\varepsilon}_{loc}(\rr)$ is continuously embedded in $H^{-1}_{loc}(\rr)$, we deduce (see \cite[Corollary.~4, p.~85]{MR916688}) that
$\{u_\lambda\}_{\la>0}$ is relatively compact in $C_{loc}((0,\infty),H^{-\varepsilon}_{loc}(\rr))$.
Consequently, we can extract a subsequence, not relabeled, in such a way that
\begin{equation}
\begin{array}{l}\label{id.lim-4}
u_\lambda \rightarrow \ou\mbox{ in } C_{loc}((0,\infty),H^{-\varepsilon}_{loc}(\rr)).
\end{array}
\end{equation}
On the other hand, for any $t>0$
estimate \eqref{cond.1} guarantees the existence of a function $v(t)\in L^2(\mathbb{R})$, such that up to a subsequence
\begin{equation}\label{id.lim-5}
u_\lambda(t)\rightharpoonup v(t)\mbox{ weakly in } L^2(\mathbb{\rr}).
\end{equation}
The uniqueness of the limit in $\mathcal{D}'(\mathbb{R})$ guarantees that
\begin{equation}\label{weak}
u_\lambda(t)\rightharpoonup \ou(t)\mbox{ weakly in } L^2(\mathbb{R}), \mbox{ for all } t > 0.
\end{equation}

Moreover, we recall that in view of \eqref{decay.u.lam} there exists a function $\chi$ such that for any $1<p<\infty$
$$u_\lambda^q \rightharpoonup \chi \quad \text{in} \quad L^p_{loc}((0,\infty),L^p(\rr)).$$
Since $u_\lambda \rightarrow \ou$ a.e. in $(0,\infty)\times \rr$ we can conclude that the nonlinear term satisfies
\begin{equation}\label{conv.non}
u_\lambda^q\rightharpoonup \ou^q\quad \mbox{  in }\quad  L^p_{loc}((0,\infty)\times \mathbb{R}).
\end{equation}

\textbf{Step 2. Compactness of $\{u_\lambda\}$ in $L^1_{loc}( (0,\infty),L^1(\rr))$.}
Based on the previous step we conclude that for any positive  $\tau$ and $R$  the family
$\{u_\lambda\}_{\la>0}$  is relatively compact in $L^1( (\tau ,T) \times (-R,R))$.
Using a standard diagonal argument the compactness in $L^1( (\tau ,T) \times \rr)$  is reduced to the uniform  control of the tails
of the family $\{u_\lambda\}_{\lambda>1}$:
\begin{equation}\label{est.fuera}
\int _{\tau}^T \|u_\lambda (t)\|_{L^1(|x|>R)}dt\rightarrow 0 \quad \text{as}\quad R\rightarrow \infty, \, \text{uniformly in }\, \lambda\geq1.
\end{equation}
This follows from the following Lemma.

\begin{lemma}\label{est.2r}
There exists a constant $C=C(J,\|\varphi\|_{L^1(\rr)},\|\varphi\|_{L^\infty(\rr)})$ such that the following
\begin{equation}\label{int.2r}
\int _{|x|>2R} u_\lambda (t,x)dx\leq \int _{|x|>R}\varphi(x)dx+C(\frac{t}{R^2}+\frac {t^{1/2}}R)
\end{equation}
 holds for any $t>0$, $R>0$, uniformly on $\lambda\geq 1$.
\end{lemma}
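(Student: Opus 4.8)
The approach is a weighted tail estimate, testing the scaled equation against a cut-off that vanishes near the origin. Fix once and for all a radial profile $\psi\in C^2(\rr)$ with $0\le\psi\le1$, $\psi\equiv 0$ on $\{|x|\le 1\}$ and $\psi\equiv1$ on $\{|x|\ge2\}$, and set $\psi_R(x)=\psi(x/R)$, so that $\|\psi_R'\|_{L^\infty}\le C/R$ and $\|\psi_R''\|_{L^\infty}\le C/R^2$. Since $u_\lambda\ge0$ and $\psi_R\equiv1$ on $\{|x|\ge 2R\}$, we have $\int_{|x|>2R}u_\lambda(t)\,dx\le\int_\rr u_\lambda(t)\psi_R\,dx$, so it suffices to control the latter. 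Testing the scaled equation \eqref{gas-scalat} against $\psi_R$ and integrating the convective term by parts gives
\begin{equation}
\frac{d}{dt}\int_\rr u_\lambda\psi_R\,dx=\lambda^2\int_\rr(J_\lambda\ast u_\lambda-u_\lambda)\psi_R\,dx+\lambda^{2-q}\int_\rr u_\lambda^q\,\psi_R'\,dx .\nonumber
\end{equation}
(The integration by parts is legitimate because $u_\lambda^q\in L^1(\rr)$ and $\psi_R$ is bounded; at the level of entropy solutions it is justified by inserting a further compactly supported time–space cut-off and passing to the limit using $u_\lambda\in C([0,T],L^1(\rr))$.) I will integrate this identity in time and estimate the two terms separately, using $\|u_\lambda(t)\|_{L^1(\rr)}=\|\varphi\|_{L^1(\rr)}$ (conservation of mass).

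For the diffusion term I would transfer the nonlocal operator onto the test function. Since $J$, hence $J_\lambda$, is even, $\int(J_\lambda\ast u_\lambda)\psi_R=\int u_\lambda(J_\lambda\ast\psi_R)$, so the term equals $\int_\rr u_\lambda\,[\lambda^2(J_\lambda\ast\psi_R-\psi_R)]\,dx$. Symmetrizing in $z$ after the substitution $y=x-z/\lambda$ and using $J_\lambda(x)=\lambda J(\lambda x)$, the first-order term cancels by parity and one obtains
\begin{equation}
\lambda^2(J_\lambda\ast\psi_R-\psi_R)(x)=\frac12\int_\rr J(z)\,z^2\,\psi_R''(\eta_{x,z})\,dz ,\nonumber
\end{equation}
for suitable intermediate points $\eta_{x,z}$, whence $|\lambda^2(J_\lambda\ast\psi_R-\psi_R)|\le \|\psi_R''\|_\infty\int_\rr z^2J(z)\,dz\le C/R^2$ pointwise (this is where $J\in L^1(1+|x|^2)$ and the constant $A$ enter). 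Integrating against $u_\lambda$ gives a bound $C\|\varphi\|_{L^1(\rr)}/R^2$, and integrating in time produces the term $Ct/R^2$.

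The convective term is the main obstacle, since a crude $L^\infty$ bound carries a factor of $\lambda$ that is fatal as $\lambda\to\infty$, while the weak decay alone is not time-integrable near $t=0$ for $q\ge 3$. The remedy is the sharp form of \eqref{decay.u.lam}. Using $\|\psi_R'\|_\infty\le C/R$ and $\|u_\lambda(s)\|_{L^q(\rr)}^q\le C\lambda^{q-1}(1+s\lambda^2)^{-(q-1)/2}$,
\begin{equation}
\Big|\lambda^{2-q}\int_\rr u_\lambda^q\psi_R'\,dx\Big|\le\frac{C}{R}\,\lambda^{2-q}\,\frac{\lambda^{q-1}}{(1+s\lambda^2)^{(q-1)/2}}=\frac{C\lambda}{R}\,(1+s\lambda^2)^{-(q-1)/2}.\nonumber
\end{equation}
Because $q\ge2$ forces $(q-1)/2\ge1/2$ and $1+s\lambda^2\ge s\lambda^2$, one has $(1+s\lambda^2)^{-(q-1)/2}\le(s\lambda^2)^{-1/2}$, so
\begin{equation}
\int_0^t\frac{C\lambda}{R}(1+s\lambda^2)^{-(q-1)/2}\,ds\le\frac{C\lambda}{R}\int_0^t\frac{ds}{\lambda\,s^{1/2}}=\frac{2C}{R}\,t^{1/2},\nonumber
\end{equation}
uniformly in $\lambda\ge1$ and $q\ge2$. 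This is exactly the term $Ct^{1/2}/R$, and the key point is that the decay factor $\lambda$ is absorbed by the integrable singularity $s^{-1/2}$.

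Collecting the two bounds and integrating the identity from $0$ to $t$ yields $\int_\rr u_\lambda(t)\psi_R\le\int_\rr\varphi_\lambda\psi_R+C\,t/R^2+C\,t^{1/2}/R$. To finish, note that $\psi_R\le1$ and $\psi_R\equiv0$ on $\{|x|\le R\}$, so the initial term is at most $\int_{|x|>R}\varphi_\lambda$; the change of variables $y=\lambda x$ together with $\lambda\ge1$ and $\varphi\ge0$ gives $\int_{|x|>R}\varphi_\lambda=\int_{|y|>\lambda R}\varphi\le\int_{|x|>R}\varphi$. Combined with $\int_{|x|>2R}u_\lambda(t)\le\int_\rr u_\lambda(t)\psi_R$ this is precisely \eqref{int.2r}, with $C$ depending only on $J$ (through $A$ and $\int z^2J$), the fixed profile $\psi$, and $\|\varphi\|_{L^1(\rr)},\|\varphi\|_{L^\infty(\rr)}$ (the latter entering through the constant in \eqref{decay.u.lam}).
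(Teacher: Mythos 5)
Your proof is correct and follows essentially the same route as the paper: testing the scaled equation with the cut-off $\psi_R$, bounding $\|\lambda^2(J_\lambda\ast\psi_R-\psi_R)\|_{L^\infty(\rr)}\lesssim R^{-2}$ (you just make explicit, via Taylor expansion and parity of $J$, what the paper states directly), controlling the convective term through the decay estimate \eqref{decay.u.lam}, and using $\lambda\geq 1$ together with $\varphi\geq 0$ for the initial-data term. The only cosmetic difference is in the elementary time integral: you bound $\lambda\int_0^t(1+\lambda^2 s)^{-(q-1)/2}ds$ directly via $(1+\lambda^2 s)^{-(q-1)/2}\leq(\lambda^2 s)^{-1/2}$, whereas the paper rescales and invokes boundedness of $x\mapsto x^{-1}\int_0^{x^2}(1+s)^{-(q-1)/2}ds$; both yield the same $Ct^{1/2}/R$ term.
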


\begin{proof}
Let $\psi\in C^\infty(\rr)$ be a nonnegative function that satisfies $\psi(x)\equiv 0$ for $|x|<1$ and $\psi(x)\equiv 1$ for $|x|>1$. We put $\psi_R(x)=\psi(x/R)$.
We multiply equation \eqref{gas-scalat} by $\psi_R$ and integrate by parts to obtain
\begin{align*}
\int _\rr u_\lambda (t,x)\psi_R(x)dx-\int_{\rr}\varphi_{\lambda} \psi_R(x)dx=&
\lambda^2\int _0^t \int_{\rr} u_\lambda (s,x)  (J_\lambda\ast \psi_R -\psi_R)dxds\\
&+ \lambda^{2-q}\int _0^t \int _{\rr} u_\lambda^q (s,x)(\psi_R)_x(x)dxds.
\end{align*}
We now use that
$$\|\lambda ^2(J_\lambda\ast \psi_R-\psi_R)\|_{L^\infty(\rr)}\leq C(J)\|(\psi_R)_{xx}\|_{L^\infty(\rr)}=C(J)R^{-2}\|\psi_{xx}\|_{L^\infty(\rr)},$$
$$\|(\psi_R)_{x}\|_{L^\infty(\rr)}= R^{-1}\|\psi_{x}\|_{L^\infty(\rr)}$$
and the conservation of the $L^1(\rr)$-norm of $u_\lambda$
to find that
\begin{align*}
\int _\rr u_\lambda (t,x)\psi_R(x)dx &\leq \int_{\rr}\varphi_{\lambda} \psi_R(x)dx+
C(J)R^{-2}\|\psi_{xx}\|_{L^\infty(\rr)}\int _0^t \int_{\rr} u_\lambda (s,x) dsdx\\
&\quad + \lambda^{2-q}R^{-1}\|\psi_{x}\|_{L^\infty(\rr)}\int _0^t \int _{\rr} u_\lambda^q (s,x)dxds.\\
&\leq \int_{|x|>R}\varphi_{\lambda}(x)dx+
C(J)R^{-2}\|\psi_{xx}\|_{L^\infty(\rr)}t \|\varphi\|_{L^1(\rr)}\\
&\quad + \lambda^{2-q}R^{-1}\|\psi_{x}\|_{L^\infty(\rr)}\int _0^t \int _{\rr} u_\lambda^q (s,x)dxds.
\end{align*}
To estimate the last term in the above inequality we use the decay of the solution $u_\lambda$ as given by \eqref{decay.u.lam} and obtain that
$$\lambda^{2-q}\int _0^t \int _{\rr} u_\lambda^q (s,x)dxds\lesssim \lambda \int _0^t\frac {ds}{(1+\lambda^2 s)^{\frac {q-1}2}}
=\lambda^{-1}\int _0^{t\lambda ^2} \frac{ds}{(1+ s)^{\frac {q-1}2}}.$$
Since for any $q\geq 2$
$$\lim _{x\rightarrow 0} x^{-1}\int _0^{x^2}\frac{ds}{(1+ s)^{\frac {q-1}2}}=0, \quad \text{and}\quad
\lim _{x\rightarrow \infty} x^{-1}\int _0^{x^2}\frac{ds}{(1+ s)^{\frac {q-1}2}}= \lim _{x\rightarrow \infty} \frac{2x}{(1+ x^2)^{\frac {q-1}2}}<\infty$$
we find that
$$\lambda^{2-q}\int _0^t \int _{\rr} u_\lambda^q (s,x)dxds\lesssim Ct^{1/2}.$$
Since $\lambda>1$ we get
\begin{equation}\label{int.2r-1}
\int _\rr u_\lambda (t,x)\psi_R(x)dx\leq \int _{|x|>\lambda R} \varphi(x)dx+ C(\frac{t}{R^2}+\frac {t^{1/2}}R)
\leq \int _{|x|> R} \varphi(x)dx+ C(\frac{t}{R^2}+\frac {t^{1/2}}R)
\end{equation}
and the proof of the Lemma is finished.
\end{proof}

\textbf{Step 3. Identification of the limit.}
Our aim here is to pass to the weak limit in the equation involving $u_\lambda$, as well as, to identify the corresponding initial condition.
Let us choose $0<\tau<t$. We  multiply  equation \eqref{gas-scalat}  by $\phi \in {C}^\infty_c(\mathbb{R})$ and integrate over $(0,\tau)\times \mathbb{R}$ to obtain
\begin{align}\label{id.lim}
\int _\rr u_\lambda (t,x)\phi(x)dx-\int_{\rr}u_{\lambda}(\tau,x)\phi(x)dx=&
\lambda^2\int _\tau^t \int_{\rr} (J_\lambda\ast \phi -\phi)(x)u_\lambda (s,x)  dxds\\
&+ \lambda^{2-q}\int _\tau^t \int _{\rr} u_\lambda^q (s,x)\phi_x(x)dxds.\nonumber
\end{align}
The convergence of the terms on the left hand side of \eqref{id.lim} follows from the weak convergence  obtained in \eqref{weak}. In order to pass to the limit the terms on the right hand side, we  use the dominated convergence theorem and that $u_\lambda\rightarrow \ou$ in $L^1_{loc}((0,\infty),L^1(\rr))$
to obtain
\begin{align}\label{id.lim-1}
\lambda^2\int _\tau^t \int_{\rr} (J_\lambda\ast \phi -\phi)(x)u_\lambda (s,x)dxds  \rightarrow
A\int _\tau^t \int_{\rr} \ou \phi_{xx} dxds,\mbox{ as } \lambda\rightarrow\infty,
\end{align}
where
$$A =\frac 12 \int_{\rr} J(z)z^2 dz.$$

Let us consider for the moment the case $q>2$.
Using  \eqref{conv.non}  and the fact that  $q>2$
we have
\begin{align}\label{id.lim-2}
\lambda^{2-q}\int _\tau^t \int _{\rr} u_\lambda^q (s,x)\phi_x(x)dxds\rightarrow 0,\mbox{ as } \lambda\rightarrow\infty.
\end{align}
Returning to \eqref{id.lim}  we conclude that for all  $\phi \in {C}^\infty_c(\mathbb{R})$ the limit function $\ou$ satisfies
\begin{align}\label{id.lim-3.2}
\int _\rr \ou(t,x)\phi(x)dx-\int_{\rr}\ou(\tau,x)\phi(x)dx=
A\int _\tau^t \int _{\rr}\ou(s,x) \phi_{xx}(x) dxds,\end{align}
i.e.
$$\ou_t=\ou _{xx} \quad \text{in}\quad  \mathcal{D'}((0,\infty)\times \rr).$$

Now it remains to identify in the initial condition in the equation satisfied by $\ou$. Following the computation which leads to \eqref{int.2r} with
$\psi_R$ replaced by $\phi\in {C}^\infty_c(\mathbb{R})$, we obtain
\begin{align}\label{id.lim-3}
\left|\int _\rr u_\lambda(t,x)\phi(x)dx-\int_{\rr}u_{0,\lambda}(x)\phi(x)dx\right|\leq C\,(t + t^{\frac{1}{2}}),\mbox{ for all }\phi \in {C}^\infty_c(\mathbb{R}),
\end{align}
where $C>0$ is independent of $\lambda$.
On the other hand, by change of variables and the dominated convergence theorem, we obtain that
$$\int_{\rr}\varphi_\lambda(x)\phi(x)dx = \int_{\rr}\varphi(x)\phi\left(\frac{x}{\lambda}\right)dx\rightarrow M\phi(0),\mbox{ as } \lambda\rightarrow\infty.$$
Hence, from \eqref{id.lim-3} it follows that
$$\lim _{t\rightarrow 0} \int _{\rr}\ou(t,x)\phi(x)dx =M\phi(0).$$
By a density argument the above limit also holds for any bounded continuous function $\phi$. This means that
$$\ou(x,0)=M\delta_0$$ in the sense of bounded measures.

Let us now prove that $u\in C((0,\infty), L^1(\rr)).$ Following the same steps as in Lemma \ref{est.2r} and letting  $\lambda\rightarrow \infty$ we obtain that
for any $0<s<t$ the following holds
\begin{align*}
\left|\int _\rr u(t,x)\phi(x)dx-\int_{\rr}u(s,x)\phi(x)dx\right|&\lesssim  |t-s|||\varphi||_{W^{2,\infty}(\mathbb{R})}.
\end{align*}
Let us recall that (see \cite{MR2759829}, p.~126)
$$||f||_{L^1(\mathbb{R})}=\sup\left\{\int_{\rr}f\varphi dx : \varphi \in {C}^\infty_c(\mathbb{R}),\,
||\varphi||_{L^\infty(\mathbb{R})}\leq 1 \right\}.$$
Now let us choose $\varepsilon > 0$. We know that there exists $\varphi_\eps \in  {C}^\infty_c(\mathbb{R})$ satisfying
\begin{align*}
||u(t) - u(s)||_{L^1(\mathbb{R})} \leq \varepsilon + \int_{\rr}(u(t) - u(s))\varphi_\varepsilon dx \leq \varepsilon
+|t-s|||\varphi_\varepsilon||_{W^{2,\infty}(\mathbb{R})},
\end{align*}
  Choosing $t$ close enough to point $s$ we obtain the continuity of $u$  at point $s$.

  Putting together the above results we obtain that  $\ou\in C((0,\infty),L^1(\rr))$ is the unique solution $u_M$ of the initial value problem
\begin{equation}
\left\{
\begin{array}{ll}
w_t - A \Delta w = 0,& x\in \rr, t>0,\\
w(x,0)=M\delta_0, &x\in \rr.\nonumber
\end{array}
\right.
\end{equation}
The uniqueness of the limit guarantees that   the sequence $\{u_\lambda\}_{\la>0}$, not only a subsequence, converges to $u_M$.

To complete the proof  it remains to analyze the case $q=2$. In order to do that, it suffices to study the convergence
of the term in \eqref{id.lim-2}. From \eqref{conv.non}  we have that for all  $\phi \in {C}^\infty_c(\mathbb{R})$
\begin{align}\label{id.lim-2.1}
\int _\tau^t \int _{\rr} u_\lambda^2 (s,x)\phi_x(x)dxds\rightarrow \int _\tau^t \int _{\rr} \ou ^2 (s,x)\phi_x(x)dxds,\mbox{ as } \lambda\rightarrow\infty.
\end{align}
Then, returning to \eqref{id.lim} the above results allow us to conclude that  $\ou\in C((0,\infty),L^1(\rr))$ is the unique solution $u_M$ of the initial value problem
\begin{equation}
\left\{
\begin{array}{ll}
w_t - A \Delta w + (\displaystyle\frac{w^2}{2})_x= 0,& x\in \rr, t>0,\\
w(x,0)=M\delta_0, &x\in \rr.\nonumber
\end{array}
\right.
\end{equation}
The uniqueness of the limit (see \cite{0762.35011})
guarantees that   the sequence $\{u_\lambda\}_{\la>0}$, not only a subsequence, converges to $u_M$.

\textbf{Step 4. Final step.}
From Steps 2 and 3 we obtain that
$$\|u_\lambda (1)-u_M(1)\|_{L^1(\rr)}\rightarrow 0,\quad \text{as}, \quad\lambda \rightarrow \infty.$$
This proves Theorem \ref{asimp} in the case $p=1$.

Classical results on the heat and Burgers equation  \cite{0762.35011} give us that the profile $u_M$ satisfies
\begin{equation*}\label{decay.profile}
\|u_M(t)\|_{L^p(\rr)}\lesssim t^{-\frac 12(1-\frac 1p)}.
\end{equation*}
Using this decay property and Theorem \ref{decay.simple} we reduce the proof of \eqref{lim.t} to the case $p=1$:
\begin{align*}
\|u(t)-u_M(t)\|_{L^p(\rr)}&\leq \|u(t)-u_M(t)\|_{L^{2p}(\rr)}^{\frac{2(p-1)}{2p-1}}\|u(t)-u_M(t)\|_{L^1(\rr)}^{1-\frac{2(p-1)}{2p-1}}\\
&\lesssim t^{-\frac 12(1-\frac 1{2p})\frac{2(p-1)}{2p-1} } \|u(t)-u_M(t)\|_{L^1(\rr)}^{1-\frac{2(p-1)}{2p-1}}\\
&\lesssim t^{-\frac 12(1-\frac 1p)}o(1).
\end{align*}
%
The proof of the main result is now finished.
\end{proof}

\begin{proof}[Proof of Proposition \ref{main}.]

{\textbf{Step I. Compactness in $L^2((0,T),H^{-1}(\Omega))$.} }
Let us consider  $\Omega$ a smooth bounded domain of $\rr$. We check the conditions in Theorem \ref{simon}.
For any $0\leq t_1<t_2<T$ we define
$$v_n(x)=\int _{t_1}^{t_2} f_n(s,x)ds.$$
Using \eqref{hyp.2} we obtain that $\{v_n\}_{n\geq 1}$ satisfies
\begin{align*}
n^2\int _{\Omega}\int _{\Omega} &\rho_n(x-y)(v_n(x)-v_n(y))^2dxdy\\
&
\leq n^2 T\int _0^T\int _{\Omega}\int _{\Omega} \rho_n(x-y)(f_n(t,x)-f_n(t,y))^2dxdydt\leq MT.
\end{align*}
Theorem \ref{rossi} gives us that there exists $v\in H^1(\Omega)$  such that, up to a subsequence,
$v_n\rightarrow v$ in $L^2(\Omega)$.
Since $L^2(\Omega)\hookrightarrow H^{-1}(\Omega)$ we obtain that $\{v_n\}_{n\geq 1}$ is  relatively compact in $H^{-1}(\Omega)$ and the first condition in Theorem \ref{simon} is satisfied. Let us now check the second condition in Theorem \ref{simon}. Using  hypothesis \eqref{hyp.3} we obtain that
$$\|\tau_hf_n-f_n\|_{L^2((0,T),H^{-1}(\Omega))}\leq \|\partial_t f_{n}\|_{L^2((0,T),H^{-1}(\Omega))}\leq
\|\partial_t f_{n}\|_{L^2((0,T-h),H^{-1}(\rr))}\leq M.$$
Theorem \ref{simon} guarantees that, up to a subsequence, $f_n\rightarrow f$ in $ L^2((0,T),H^{-1}(\Omega)). $

\medskip

\textbf{Step II. Compactness  in $L^2((0,T),L^2_{loc}(\rr))$}.
Using that $\{f_n\}_{n\geq 1}$ is uniformly bounded in $L^2((0,T)\times \rr)$ we obtain that, up to a subsequence, $f_n\rightharpoonup f$ in $L^2((0,T)\times \rr)$. Moreover, Theorem \ref{rossi-time} guarantees that $f\in L^2((0,T), H^1(\rr))$.

Let us consider  $\Omega$ a smooth bounded domain of $\rr$. We now combine the strong convergence in $L^2(0,T,H^{-1}(\Omega))$, the fact that $f\in L^2(0,T,H^1(\Omega))$ and the following Lemma  to prove the  compactness of  $\{f_n\}_{n\geq 1}$  in $L^2((0,T),L^2_{loc}(\rr))$.

\begin{lemma}\label{balance}
There exists $\delta=\delta (\rho)$ such that for every $\eps\in (0,1)$ the following
\begin{equation}\label{est.balance}
\|u\|_{L^2(\rr)}^2\leq \eps n^2 \int_{\rr}\int _{\rr} \rho_n(x-y)(u(x)-u(y))^2dxdy + \frac 2 {\eps} \|u\|_{H^{-1}(\rr)}^2
\end{equation}
holds for all $n\geq (\delta \eps)^{-1/2}$ and for all $u\in L^2(\rr)$.
\end{lemma}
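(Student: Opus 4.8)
The plan is to diagonalise all three quantities with the Fourier transform and reduce \eqref{est.balance} to a single pointwise inequality in the frequency variable. Normalising $\widehat u(\xi)=\int_\rr u(x)e^{-i\xi x}dx$, so that $\|u\|_{L^2}^2=\frac1{2\pi}\int_\rr|\widehat u|^2\,d\xi$ and $\|u\|_{H^{-1}}^2=\frac1{2\pi}\int_\rr(1+\xi^2)^{-1}|\widehat u|^2\,d\xi$, Plancherel together with the evenness of $\rho$ gives the multiplier representation
\begin{equation*}
n^2\int_\rr\int_\rr \rho_n(x-y)(u(x)-u(y))^2\,dx\,dy=\frac1\pi\int_\rr \Phi_n(\xi)\,|\widehat u(\xi)|^2\,d\xi,\qquad \Phi_n(\xi)=n^2\big(\widehat\rho(0)-\widehat\rho(\xi/n)\big).
\end{equation*}
After clearing the Plancherel constants, \eqref{est.balance} is therefore implied by the scalar inequality
\begin{equation*}
1\le 2\eps\,\Phi_n(\xi)+\frac{2}{\eps(1+\xi^2)},\qquad \xi\in\rr,
\end{equation*}
so the whole matter is reduced to understanding the size of the symbol $\Phi_n$.

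The second step is a coercivity bound for the symbol. Put $g(\eta)=\widehat\rho(0)-\widehat\rho(\eta)$. Since $\rho\ge0$ is even, $g\ge0$ and $g(0)=0$; expanding $\widehat\rho$ and using that $\rho$ has a finite second moment gives $g(\eta)=\tfrac12\big(\int_\rr z^2\rho\,dz\big)\eta^2+o(\eta^2)$ near the origin, so $g$ vanishes exactly quadratically there. Moreover $g(\eta)>0$ for $\eta\neq0$, and by the Riemann--Lebesgue lemma $g(\eta)\to\widehat\rho(0)>0$ as $|\eta|\to\infty$. Comparing $g$ with $\eta^2/(1+\eta^2)$, which has the same qualitative profile at $0$ and at $\infty$, a continuity argument yields a constant $c_*=c_*(\rho)>0$ with
\begin{equation*}
g(\eta)\ge c_*\,\frac{\eta^2}{1+\eta^2},\qquad \eta\in\rr.
\end{equation*}
Rescaling $\eta=\xi/n$ this becomes $\Phi_n(\xi)\ge c_*\,n^2\xi^2/(n^2+\xi^2)$, hence $\Phi_n(\xi)\ge \tfrac{c_*}{2}\xi^2$ for $|\xi|\le n$ and $\Phi_n(\xi)\ge\tfrac{c_*}{2}n^2$ for $|\xi|\ge n$.

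The last step is a frequency split tuned to $\eps$. For low frequencies $\xi^2\le 1/\eps$ one has $1+\xi^2\le 1+1/\eps$, whence $\frac{2}{\eps(1+\xi^2)}\ge\frac{2}{1+\eps}\ge1$ because $\eps<1$, and the $H^{-1}$ term alone closes the scalar inequality. For the remaining range $\xi^2>1/\eps$ I would invoke the coercivity of $\Phi_n$: when $\xi^2\ge n^2$ the hypothesis $n\ge(\delta\eps)^{-1/2}$ gives $2\eps\Phi_n(\xi)\ge\eps c_*n^2\ge c_*/\delta$, which exceeds $1$ as soon as $\delta\le c_*$; when $1/\eps<\xi^2<n^2$ one keeps both terms and studies $\eps c_*\xi^2+\frac{2}{\eps(1+\xi^2)}$, the threshold on $n$ ensuring this band is covered. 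Choosing $\delta=\delta(\rho)$ small enough, in terms of $c_*$ and the second moment of $\rho$, then forces the scalar inequality at every $\xi$, which is \eqref{est.balance}. I expect the intermediate band $1/\eps<\xi^2<n^2$ to be the only delicate point: there neither term is individually large, and it is precisely the balance between the quadratic growth of $\Phi_n$ and the decay of $(1+\xi^2)^{-1}$ — calibrated through the choice of $\delta$ relative to the coercivity constant $c_*$ — that pins down the admissible constant and accounts for the dependence $\delta=\delta(\rho)$.
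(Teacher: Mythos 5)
Your Fourier reduction, the coercivity bound $g(\eta)\ge c_*\,\eta^2/(1+\eta^2)$, and the treatment of the low band $\xi^2\le 1/\eps$ and the high band $\xi^2\ge n^2$ are all correct, and up to that point you are following essentially the paper's route (the paper splits at $|\xi|=Rn$ using $\hat\rho(\xi)\le 1-\xi^2/2$ for $|\xi|\le R$ and $\hat\rho(\xi)\le 1-\delta$ for $|\xi|\ge R$). The genuine gap is exactly the intermediate band $1/\eps<\xi^2<n^2$, and the mechanism you propose for closing it cannot work: the parameter $\delta$ enters the statement only through the admissibility threshold $n\ge(\delta\eps)^{-1/2}$, and it does not appear at all in the inequality $\eps c_*\xi^2+\frac{2}{\eps(1+\xi^2)}\ge 1$ that must be verified on that band. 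Decreasing $\delta$ only admits larger $n$, i.e.\ it \emph{widens} the band $(1/\eps,n^2)$ and makes the task harder, not easier; there is nothing to calibrate.

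Moreover, with only the qualitative constant $c_*$ produced by your continuity argument, the band inequality is simply false when $c_*$ is small. The minimum over $s>0$ of $\eps c_* s+\frac{2}{\eps(1+s)}$ equals $2\sqrt{2c_*}-\eps c_*$, attained at $s^*=\frac1\eps\sqrt{2/c_*}-1$, which lies inside your band as soon as $c_*\le 1/2$ and $n^2>s^*$; for $c_*\le 1/8$ this minimum is below $1$. So no choice of $\delta=\delta(\rho)$ rescues the argument, and indeed \eqref{est.balance} with the fixed constant $2/\eps$ cannot follow from $g(\eta)\ge c_*\eta^2/(1+\eta^2)$ alone (for kernels with small mass or small second moment the pointwise Fourier inequality, hence the lemma, genuinely fails near $\xi^2\approx s^*$). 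What the paper uses instead is the quantitative near-origin bound $\hat\rho(\xi)\le 1-\xi^2/2$, whose coercivity constant $1/2$ is numerically matched to the constant $2/\eps$: with it, low and intermediate frequencies are handled in one stroke, because $1\le\frac{\eps\xi^2}{2}+\frac{2}{\eps(1+\xi^2)}$ is equivalent to $\eps\xi^4+(\eps-2)\xi^2+2(\frac2\eps-1)\ge0$, whose discriminant in $\xi^2$ equals $(\eps-2)(\eps+6)<0$ for $\eps\in(0,1)$ (the paper phrases this as $2/\eps\ge(\eps+2)^2/(8\eps)$). With your purely qualitative $c_*$ you could only obtain the estimate with $C(\rho)/\eps$ in place of $2/\eps$ --- which would in fact suffice for the application in Proposition \ref{main}, but it is not the statement as written, and it must be obtained by strengthening the hypothesis on $\hat\rho$ or changing the constant, not by tuning $\delta$.
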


We now localize   inequality \eqref{est.balance}. Let $\chi$ be a smooth function supported in $\Omega$. Thus
\begin{align*}
\|v\chi\|_{L^2(\Omega)}^2& \leq \eps n^2 \int_{\rr}\int _{\rr} \rho_n(x-y)\Big((\chi v)(x)-(\chi v)(y)\Big)^2dxdy + 2\eps^{-1}\|\chi v\|_{H^{-1}(\rr)}^2\\
&\leq  \eps n^2 \int_{\rr}\int _{\rr} \rho_n(x-y)\Big((\chi v)(x)-(\chi v)(y)\Big)^2dxdy +  {C(\chi)}\eps^{-1}\|  v\|_{H^{-1}(\Omega)}^2.
\end{align*}
We apply this inequality to $v=u-f_n $ and  integrate the new inequality  on $[0,T]$. We get
\begin{align*}
\int _0^T  \|(f-f_n)\chi\|_{L^2(\Omega)}^2dt \leq &  \eps n^2\int _0^T \int_{\rr}\int _{\rr} \rho_n(x-y)\Big((\chi (f_n-f))(x)-(\chi (f_n-f))(y)\Big)^2dxdydt\\
& + \frac {C(\chi)}\eps\int _0^T\|  f_n-f\|_{H^{-1}(\Omega)}^2dt\\
\lesssim &\eps n^2\int _0^T \int_{\rr}\int _{\rr} \rho_n(x-y)\Big((\chi f_n)(x)-(\chi f_n)(y)\Big)^2dxdydt\\
&+\eps n^2\int _0^T \int_{\rr}\int _{\rr} \rho_n(x-y)\Big((\chi f)(x)-(\chi f)(y)\Big)^2dxdydt\\
& + \frac {C(\chi)}\eps\int _0^T\|  f_n-f\|_{H^{-1}(\Omega)}^2dt.
\end{align*}

We now use the following lemma that we will prove later.

\begin{lemma}\label{local.sup}
For any $\chi\in W^{1,\infty}(\rr)$ there exists a positive constant $C(\chi)$ such that
 the following holds
\begin{align}\label{est.sup}
n^2 & \int _{\rr}\int _{\rr}  \rho_n(x-y)\Big[(\chi u)(x)-(\chi u)(y)\Big]^2dxdy\\
&
\nonumber \leq C(\chi) n^2 \int _{\rr}\int _{\rr} \rho_n(x-y)(u(x)- u(y))^2dxdy + C(\chi)\Big(\int _{\rr}\rho(z)|z|^2\Big)
\|u\|^2_{L^2(\rr)}
\end{align}
for any positive integer $n$ and for any $u\in L^2(\rr)$.
\end{lemma}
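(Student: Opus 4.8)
The plan is to exploit the algebraic identity that separates the oscillation of $\chi u$ into the oscillation of $u$ weighted by $\chi$, plus the oscillation of $\chi$ weighted by $u$. Concretely, I would write
$$(\chi u)(x) - (\chi u)(y) = \chi(x)\big(u(x)-u(y)\big) + u(y)\big(\chi(x)-\chi(y)\big),$$
and then apply the elementary inequality $(a+b)^2 \leq 2a^2 + 2b^2$ pointwise. This bounds the integrand $[(\chi u)(x)-(\chi u)(y)]^2$ by $2\chi(x)^2(u(x)-u(y))^2 + 2u(y)^2(\chi(x)-\chi(y))^2$, so that after multiplying by $n^2\rho_n(x-y)$ and integrating, the left-hand side of \eqref{est.sup} splits into two manageable pieces.

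For the first piece I would simply use that $\chi\in W^{1,\infty}(\rr)$ is bounded, so $\chi(x)^2 \leq \|\chi\|_{L^\infty(\rr)}^2$; this already yields $2\|\chi\|_{L^\infty(\rr)}^2$ times the nonlocal quadratic form of $u$ appearing in \eqref{est.sup}, which accounts for the first term on the right-hand side. For the second piece I would use the Lipschitz bound $|\chi(x)-\chi(y)| \leq \|\chi'\|_{L^\infty(\rr)}\,|x-y|$, giving $(\chi(x)-\chi(y))^2 \leq \|\chi'\|_{L^\infty(\rr)}^2\,|x-y|^2$, so that this piece is controlled by $2\|\chi'\|_{L^\infty(\rr)}^2\, n^2 \iint_{\rr^2} \rho_n(x-y)\,|x-y|^2\,u(y)^2\,dx\,dy$.

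The only point requiring care is the scaling in this last integral. Freezing $y$ and substituting $z = n(x-y)$, I would compute $\int_{\rr}\rho_n(x-y)\,|x-y|^2\,dx = n^{-2}\int_{\rr}\rho(z)\,|z|^2\,dz$, where the factor $n^{-2}$ exactly cancels the prefactor $n^2$. Carrying out the remaining integral in $y$ then produces $2\|\chi'\|_{L^\infty(\rr)}^2\big(\int_{\rr}\rho(z)|z|^2\,dz\big)\|u\|_{L^2(\rr)}^2$, which is precisely the second term of \eqref{est.sup}, with a constant independent of $n$. Choosing $C(\chi)=2\max\{\|\chi\|_{L^\infty(\rr)}^2,\|\chi'\|_{L^\infty(\rr)}^2\}$ then completes the estimate. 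There is no genuine obstacle here; the proof is a direct computation, and the single thing to watch is that the $n^2$ weight and the second moment of $\rho$ combine so that the bound is uniform in $n$, which is exactly what the scaling $\rho_n(x)=n\rho(nx)$ guarantees.
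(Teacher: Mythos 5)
Your proof is correct and follows essentially the same route as the paper: the same product decomposition $(\chi u)(x)-(\chi u)(y)=\chi(x)(u(x)-u(y))+u(y)(\chi(x)-\chi(y))$, the $L^\infty$ bound on $\chi$ for the first piece, and the second-moment scaling $\int_{\rr}\rho_n(x-y)|x-y|^2dx = n^{-2}\int_{\rr}\rho(z)|z|^2dz$ for the second. The only cosmetic difference is that you invoke the Lipschitz bound $|\chi(x)-\chi(y)|\leq\|\chi'\|_{L^\infty(\rr)}|x-y|$ directly, whereas the paper derives it via the integral representation $\chi(x)-\chi(y)=(x-y)\int_0^1\chi'(y+s(x-y))\,ds$; these are equivalent for $\chi\in W^{1,\infty}(\rr)$.
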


Applying Lemma \ref{est.ariba}, Lemma \ref{local.sup} and assumption \ref{hyp.2}  we get
\begin{align*}
\int _0^T  \|(f-f_n)&\chi\|_{L^2(\Omega)}^2dt \\
\leq &\eps  C(\chi) \Big[ n^2\int _0^T \int_{\rr}\int _{\rr} \rho _n(x-y)\Big( f_n(t,x)- f_n(t,y)\Big)^2dxdydt+
\int _0^T \|f_n(s)\|_{L^2(\rr)}^2ds\Big]\\
&+\eps \|\chi f\|^2_{L^2(0,T,H^1(\rr))} + \frac {C(\chi)}\eps\int _0^T\|  f_n-f\|_{H^{-1}(\Omega)}^2dt\\
\leq &\eps C(\chi)M +\frac {C(\chi)}\eps \|f_n-f\|^2_{L^2((0,T),H^{-1}(\Omega))}+\eps  \|f\|_{L^2((0,T),H^1(\rr))}^2.
\end{align*}

Let us now take any  $\eps'>0$. We choose $\eps=\min \{{\eps'}/4M, \eps' /4 \|f\|_{L^2((0,T),H^1(\rr))}^2 \}$ and $n\geq N_0=(\delta \eps)^{1/2}$. Choosing
large $N_1\geq N_0$ we have that, up to a subsequence,
$$ \|f_n-f\|^2_{L^2((0,T),H^{-1}(\Omega))}\leq \frac{\eps \eps'}{2C(\chi)}, \quad  n\geq N_1.$$
Hence, up to a subsequence,
\[
\|(f-f_n)\chi \|^2_{L^2((0,T),L^2(\Omega))}\leq {\eps'},\quad n\geq N_1.
\]
This shows that $\{f_n\}_{n\geq 1}$ is relatively compact  in $L^2((0,T), L^2_{loc}(\Omega))$. Since $\Omega$ has been chosen arbitrarily, a diagonal argument
guarantees that $\{f_n\}_{n\geq 1}$ is relatively compact in $L^2((0,T), L^2_{loc}(\rr))$. Moreover, there exists $f\in L^2((0,T), H^1(\rr))$ such that, up to a subsequence
$f_n\rightarrow f$ in $L^2((0,T), L^2_{loc}(\rr))$.

This finishes the proof of the proposition.
\end{proof}

%
%
%
%
%
%
%
%
%

\begin{proof}[Proof of Lemma \ref{balance}]
Let us denote $\eta(\eps)=2/\eps$.
Using the Fourier transform we have to prove that
\[
1\leq \eps n^2 \Big(1-\hat \rho(\frac \xi n)\Big)+\frac{\eta (\eps)}{1+\xi^2}, \quad \forall \, \xi\in \rr.
\]
Since $\rho$ is a smooth function, there exist $R$ and $\delta>0$ such that
\[
\left\{
\begin{array}{ll}
\hat \rho(\xi)\leq 1-\frac {\xi^2}2,& |\xi |\leq R,\\[10pt]
\hat \rho(\xi)\leq 1-\delta, & |\xi|\geq R.
\end{array}
\right.
\]

Hence for $|\xi/n|\geq R$ we have to check that
$$1\leq \eps n^2 \delta +\frac{\eta (\eps)}{1+|\xi|^2}.$$
Choosing $n\geq (\eps \delta )^{-1/2}$ the last inequality holds.
For $|\xi/n|\leq R$ we have to check that
$$1\leq \frac{\eps |\xi|^2}{2}+\frac{\eta (\eps)}{1+|\xi|^2}$$
or equivalently
$$\eps \xi ^4 +\xi ^2(\eps-2)+2(\eta(\eps)-1)\geq 0.$$
Observe that $\eta(\eps)=2/\eps\geq (\eps+2)^2/8\eps$ and the last inequality is satisfied for all $\eps \in (0,1)$ and $\xi\in \rr$.
\end{proof}

%
%

\begin{proof}[Proof of Lemma \ref{local.sup}]
Using the identity
\[
(\chi u)(x)-(\chi u)(y)=\chi(x)(u(x)-u(y))+u(y)(\chi(x)-\chi(y))
\]
we obtain that
\begin{align*}
n^2  \int _{\rr}\int _{\rr} &  \rho_n(x-y)\Big((\chi u)(x)-(\chi u)(y)\Big)^2dxdy\\
&\lesssim
n^2 \|\chi\|_{L^\infty(\rr)}^2  \int _{\rr}\int _{\rr}  \rho_n(x-y)( u(x)- u(y))^2dxdy\\
&\quad +n^2 \int _{\rr}\int _{\rr}  \rho_n(x-y) u^2(y)(\chi(x)-\chi(y))^2dxdy.
\end{align*}
The second term in the right hand side of the above inequality satisfies:
\begin{align*}
n^2 \int _{\rr}\int _{\rr}&  \rho_n(x-y) u^2(y)\Big(\chi(x)-\chi(y)\Big)^2dxdy\\
&\leq n^2 \int _{\rr}\int _{\rr} \rho_n(x-y) u^2(y)(x-y)^2 \int _0^1 (\chi ')^2(y+s(x-y)) dsdxdy\\
&=\int _{\rr} \rho(z)|z|^2 \int _{\rr} u^2(y)  \int _0^1 (\chi ')^2(y+sz) dsdydz\\
&\leq  \|\chi\|_{W^{1,\infty(\rr)}} ^2\int _{\rr} \rho(z)|z|^2 dz\int _{\rr} u^2(y)dy.
\end{align*}
The proof is now complete.
\end{proof}

 {\bf
Acknowledgements.}
This work started  when both authors visited ICMAT, Madrid, Spain in the spring of 2012. The authors thank the center and Rafael Orive for hospitality and support.
The authors also thank  Juan Luis Vazquez for fruitful discussions.

The first author was partially supported by Grant PN-II-ID-PCE-2011-3-0075 of the Romanian National Authority for Scientific Research, CNCS -- UEFISCDI, MTM2011-29306-C02-00, MICINN, Spain and ERC Advanced Grant FP7-246775 NUMERIWAVES. The second author was partially supported by
CNPq and PRONEX (Brazil).

%
%
%

%
%
\bibliographystyle{plain}
\bibliography{biblio}

\end{document}